%% This is file `elsarticle-template-1-num.tex',
%%
%% Copyright 2009 Elsevier Ltd
%%
%% This file is part of the 'Elsarticle Bundle'.
%% ---------------------------------------------
%%
%% It may be distributed under the conditions of the LaTeX Project Public
%% License, either version 1.2 of this license or (at your option) any
%% later version.  The latest version of this license is in
%%    http://www.latex-project.org/lppl.txt
%% and version 1.2 or later is part of all distributions of LaTeX
%% version 1999/12/01 or later.
%%
%% The list of all files belonging to the 'Elsarticle Bundle' is
%% given in the file `manifest.txt'.
%%
%% Template article for Elsevier's document class `elsarticle'
%% with numbered style bibliographic references
%%
%% $Id: elsarticle-template-1-num.tex 149 2009-10-08 05:01:15Z rishi $
%% $URL: http://lenova.river-valley.com/svn/elsbst/trunk/elsarticle-template-1-num.tex $
%%
%\documentclass[preprint,12pt]{elsarticle}

%% Use the option review to obtain double line spacing
%% \documentclass[preprint,review,12pt]{elsarticle}

%% Use the options 1p,twocolumn; 3p; 3p,twocolumn; 5p; or 5p,twocolumn
%% for a journal layout:
\documentclass[final,1p,times]{elsarticle}
%% \documentclass[final,1p,times,twocolumn]{elsarticle}
%% \documentclass[final,3p,times]{elsarticle}
%% \documentclass[final,3p,times,twocolumn]{elsarticle}
%% \documentclass[final,5p,times]{elsarticle}
%% \documentclass[final,5p,times,twocolumn]{elsarticle}

%% if you use PostScript figures in your article
%% use the graphics package for simple commands
%% \usepackage{graphics}
%% or use the graphicx package for more complicated commands
%% \usepackage{graphicx}
%% or use the epsfig package if you prefer to use the old commands

%%\usepackage{epsfig}

%% The amssymb package provides various useful mathematical symbols
\usepackage{amssymb}
\newcommand{\R}{{\Bbb R}}
\newcommand{\Z}{{\Bbb Z}}
\newcommand{\N}{{\Bbb N}}

%% The amsthm package provides extended theorem environments
\usepackage{amsmath}

\newtheorem{thm}{Theorem}
\newtheorem{lemma}[thm]{Lemma}

\newtheorem{example}[thm]{Example}
\newtheorem{definition}[thm]{Definition}

\newproof{proof}{Proof}

% \usepackage{amsthm}

%% The lineno packages adds line numbers. Start line numbering with
%% \begin{linenumbers}, end it with \end{linenumbers}. Or switch it on
%% for the whole article with \linenumbers after \end{frontmatter}.
%% \usepackage{lineno}

%% natbib.sty is loaded by default. However, natbib options can be
%% provided with \biboptions{...} command. Following options are
%% valid:

%%   round  -  round parentheses are used (default)
%%   square -  square brackets are used   [option]
%%   curly  -  curly braces are used      {option}
%%   angle  -  angle brackets are used    <option>
%%   semicolon  -  multiple citations separated by semi-colon
%%   colon  - same as semicolon, an earlier confusion
%%   comma  -  separated by comma
%%   numbers-  selects numerical citations
%%   super  -  numerical citations as superscripts
%%   sort   -  sorts multiple citations according to order in ref. list
%%   sort&compress   -  like sort, but also compresses numerical citations
%%   compress - compresses without sorting
%%
%% \biboptions{comma,round}

% \biboptions{}

%\journal{AMC}

\begin{document}

\begin{frontmatter}

%% Title, authors and addresses

%% use the tnoteref command within \title for footnotes;
%% use the tnotetext command for the associated footnote;
%% use the fnref command within \author or \address for footnotes;
%% use the fntext command for the associated footnote;
%% use the corref command within \author for corresponding author footnotes;
%% use the cortext command for the associated footnote;
%% use the ead command for the email address,
%% and the form \ead[url] for the home page:
%%
%% \title{Title\tnoteref{label1}}
%% \tnotetext[label1]{}
%% \author{Name\corref{cor1}\fnref{label2}}
%% \ead{email address}
%% \ead[url]{home page}
%% \fntext[label2]{}
%% \cortext[cor1]{}
%% \address{Address\fnref{label3}}
%% \fntext[label3]{}

\title{Almost periodic evolution systems with impulse action at state-dependent moments }

\author[a,d]{Robert Hakl}
\author[b]{Manuel Pinto}
\author[c]{Viktor  Tkachenko}
\author[d]{Sergei Trofimchuk\footnote{Corresponding author.}}
\address[a]{Institute of Mathematics, AS CR, Prague, Czech Republic
\\ {\rm E-mail: hakl@ipm.cz}}
\address[b]{Facultad de Ciencias, Universidad
de Chile,  San\-tia\-go, Chile  
\\ {\rm E-mail: pintoj@uchile.cl}}
\address[c]{Institute of Mathematics,
National Academy of Sciences of Ukraine,  Tereshchenkivs'ka str.
3,  Kyiv, Ukraine
\\ {\rm E-mail: vitk@imath.kiev.ua}}
\address[d]{Instituto de Matem\'atica y Fisica, Universidad de Talca, Casilla 747,
Talca, Chile \\ {\rm E-mail: trofimch@inst-mat.utalca.cl}}
\bigskip

\begin{abstract}
\noindent 
We study the existence of almost periodic solutions
for semi-linear abstract parabolic evolution equations
with impulse action at state-dependent moments. In particular, we present conditions excluding the beating phenomenon in these systems. The main result is illustrated with an example of  impulsive diffusive logistic equation.
\end{abstract}
\begin{keyword} Wexler's almost periodic solution; evolution system; impulse action at variable times; beating phenomenon \\

\vspace{3mm}

{\it 2010 Mathematics Subject Classification}: {\ 34A37, 34C27, 34G20}
% keywords here, in the form: keyword \sep keyword
\end{keyword}

\end{frontmatter}

\newpage

\section{Introduction} \label{intro}
\noindent The studies of almost periodic and almost automorphic solutions constitute a significant part 
of the theory of impulsive systems.  Already in their seminal work  of 1968,  "Teoria calitativa a sistemelor cu impulsuri", A. Halanay and D. Wexler elaborated a framework showed to be adequate  to approach the topic of almost periodicity in different contexts of the theory of discontinuous semi-dynamical systems.  One of cornerstones of this framework was the concept of an almost periodic measure  \cite{Sch},   in the posterior works usually reduced to a simpler subclass of Wexler's almost periodic measures, cf. \cite{TrSP}.     
The Halanay and Wexler's book \cite{HW}  triggered the interest  of various researchers in the field of differential equations, and the monographs  \cite{Pankov,SP,S,TrSP} present the main achievements  of the almost periodic impulsive theory obtained during the last decades of the past century. As it was shown in \cite{STr,Tr,TrSP}, in the case when the consecutive moments $t_j$ of impulse action are uniformly separated, i.e. $\inf_j\{t_{j+1}-t_j\}\geq \theta >0$, almost periodic impulsive equations generate equivalent   
continuous semi-dynamical systems. In consequence,  various principles (e.g. formulated by Favard, Levitan, Zhikov)  of the classical almost periodic theory are also valid for the impulsive case.  In the latter context, sometimes it is convenient to replace  Wexler's concept of a piecewise continuous almost periodic function with more simple  definition of Stepanov  almost periodic solution \cite{STr,Tr,TrSP}. The case when $\inf_j\{t_{j+1}-t_j\}=0$ is much more complicated and can produce  various surprising effects.  For instance,  an exponentially dichotomic almost periodic linear inhomogeneous system typically does not possess any almost periodic solution. Nevertheless, it still has a unique essentially automorphic (more precisely, Levitan $N-$almost periodic) bounded solution  \cite{STr}. 

The recent years have again witnessed a growth of interest in the theory of almost periodic impulsive systems (and their applications as well,
cf. \cite{PG1,PG2,S}). 
First, because of an interesting and promising connection between almost periodic dynamic equations on time scales and almost periodic 
impulsive systems \cite{LMP}. It seems that  the studies of general almost periodic time scales \cite{WA} can benefit from the general theory 
of almost periodic sets on the real line \cite{STsets,TrSP}.  Second, we would like to mention a series of recent studies of topological impulsive 
semiflows, where the concept of an almost periodic motion plays one of the central roles. See \cite{BJ1,BJ2,BJ3} for more details and references. 
And, finally, the subclass of abstract impulsive systems seems now to be attracting much more interest from the experts in the field, e.g. see  
\cite{ABET,BJ2,HAR, EH1,EH2,PR,Sla,SA,OO,Tk2}.  Periodic and almost periodic solutions of these systems were investigated by many authors, 
we refer the reader to  \cite{HAR,Pankov,RogovY,Ronto,SA,Tk2,Tr} for some relevant results and  further references. Due to the complex nature
of abstract almost periodic impulsive equations,  they always were  considered with  pre-fixed (i.e. state-independent) moments of impulse action. 
However, these moments may depend on 
the current state of the evolutionary process \cite{AP,RT,Ronto,SP} that requires the analysis of almost periodic evolution systems 
with impulse action at variable times.  
In the present work, we are doing the first step in this direction, by 
investigating the abstract almost periodic system    
\begin{eqnarray} \label{ban1}
& & \frac{dx}{dt} + (A + A_1(t))x = f(t, x), \quad t \not= \tau_j(x), \\[2mm]
& & x(t + 0) - x(t) = g_j(x(t)), \quad  t = \tau_j(x(t)),\  j \in \mathbb Z, \label{ban2}
\end{eqnarray}
having impulsive forces located on the surfaces $ \Gamma_j: =\{(t,x): t= \tau_j(x)\}$ which are  uniformly separated 
each from other.  Here $x(t), \ t \in \R,$ belongs to a Banach space $X$, $A$ is a sectorial operator in $X$, and closed operators $A_1(t), \ t \in \R,$ are generally unbounded in $X$. Our main goal is to develop a new approach 
to the main challenges appearing in the studies of system (\ref{ban1}), (\ref{ban2}): (a) the fluctuation of points of discontinuities from one solution of  (\ref{ban1}), (\ref{ban2}) to another; (b) the beating phenomenon, when a  trajectory of  (\ref{ban1}), (\ref{ban2}) 
may hit the same surface $\Gamma_j$ several times; (c) an adequate election of functional spaces, in order to 
obtain `sufficiently strong' almost periodic solutions.  It should be observed here that in many works the possibility of beatings of trajectories is usually excluded at the very beginning of studies. It can be reached by assuming rather strong restrictions on the pairs $\{\tau_j,g_j\}$,   cf. \cite{BJ1}.  To simplify our exposition, we will also exclude the beatings; nevertheless, our restrictions seem to be rather moderate from the geometrical point of view, see Lemma 
\ref{L3v} below.  In any case, it is completely natural (at least, from the perspective of applications in mechanics) to ask about the existence of almost periodic regimes with beatings \cite{TrSP}. 
  Following the tradition, we will also invoke the usual definition of Wexler's  piecewise continuous almost periodic function. This requires auxiliary results similar to Lemma \ref{wep}  (a  `prototype' version of which can be found in \cite{HW}: here we include the proof of this lemma  for the completeness).  The main result of this paper is Theorem \ref{Thm7} in the third section.  It states the existence of 
an almost periodic solution to system (\ref{ban1}), (\ref{ban2}) under the assumptions of sufficient roughness 
of its linear part and smallness of the Lipschitz constants for all nonlinearities  $f(t,x), \tau_j(x),g_j(x)$ in the variable $x$. The aforementioned roughness is expressed in terms of the exponential dichotomies of evolution systems \cite{H,VIT}, and some properties of the  associated Green functions are analysed in Section \ref{SGF}. 

To prove the existence of almost periodic solution, we follow the strategy proposed  in \cite{RT} for the abstract periodic impulsive systems (\ref{ban1}), (\ref{ban2}) (and which differs from the reduction method proposed in \cite{AP} for the finite-dimensional systems). Namely, we construct some Poncar\'e type map $S$ in a set $\frak{N}$ of almost periodic sequences with values in the Banach space $X$.   Theorems \ref{T7} and \ref{Thm7} where this map is constructed and analysed are the most technically involved parts of this article. 
A unique fixed point of the map $S$ in $\frak{N}$ corresponds to an almost periodic solution for the impulsive system (\ref{ban1}), (\ref{ban2}). The main result is illustrated with an example of  impulsive diffusive logistic equation. 

\section{Basic definitions and preliminary results} 
\subsection{Almost periodicity} \noindent In the paper, $X$ denotes a Banach space provided with the norm $|\cdot|$.   A sequence of elements $x_k, \  k \in \Z,$ of $X$ is called almost periodic if  {\it  for every $\epsilon >0$ there exists $l >0$ such that each subinterval of \ $\R$ of the length $l$ contains some integer $p$ with the property }
$$
|x_{k+p} - x_k| < \epsilon, \quad k \in \Z. 
$$ 
The integer $p$ is called an $\epsilon$-almost period of the sequence $\{x_k\}$ and the above words in italic are usually 
shortened to "{\it the set of $\epsilon$-almost periods of $\{x_k\}$  is relatively dense}".  It is well known that 
each almost periodic sequence is bounded. 

Assume now that $a>0$,  $\{c_k\}$ is an almost periodic sequence of real numbers, and 
$\tau_k = a k + c_k, \ k \in \Z$, defines a strictly increasing sequence (hence, $\{\tau_k\}$ does not have finite limit points).  Following \cite{SP,STr}, we call such a collection of points $\{\tau_k\}$ strongly almost periodic subset of $\R$.  
We also will say that a bounded function $\phi:\R \to X$ is W-almost periodic [i.e. Wexler-almost periodic] with eventual discontinuities at $\tau_j$, if $\phi$ is uniformly continuous on the disjoint union of intervals $(\tau_{j-1}, \tau_j)$ and if for every $\epsilon > 0$ there exists  a relatively dense set $\Gamma$ such that  $$|\varphi(t +\tau) - \varphi(t)| < \epsilon \quad \mbox{once}\quad \tau \in \Gamma,\  |t - \tau_k| \ge \epsilon, \ k \in \mathbb{Z}.$$
Clearly, this definition mimics the classical concept of Bohr almost periodic function (i.e. 
continuous function $\phi : \mathbb{R} \to X$ possessing, for any $\epsilon > 0$, a relatively dense set $\Gamma$ of $\epsilon$-almost periods). 
In fact,  as it was shown in \cite[Lemma 5]{Tr},   Wexler almost periodicity of $X-$valued piecewise continuous function $\phi$ is equivalent 
to the Bohr almost periodicity of some associated $L_1([0,1],X)$- valued function 
(i.e. is equivalent to  the Stepanov almost periodicity of $\phi$).  

The following assertion shows how the almost periods for the triple consisting from a W-almost periodic function, 
an almost periodic sequence and a strongly almost periodic subset of $\R$ can be harmonized:    
\begin{lemma} \label{wep} Consider a strongly almost periodic set  $\{\tau_j\}$ and suppose that $\tau_{j+1} -\tau_j > 4\theta$ for some positive $\theta$. Let also the sequence 
$\{B_j\}$ of elements of some Banach space $Y$ be almost periodic and let  
$f: \mathbb{R} \to X$ be W-almost periodic  function.  Then 
for any $\epsilon > 0$  there exists $l >0$ such that  each subinterval of \ $\R$ of the length $l$ contains some integer $q$ and a real number $r$  such that, for all $k \in \mathbb{Z}$, 
$$|B_{k+q} - B_k| < \epsilon, \ |(\tau_{k+q}-\tau_{k}) - r| < \epsilon,\ \mbox{and} \quad |f(t+r) - f(t)| < \epsilon,\ t\in \R, \  |t - \tau_j| > \epsilon,\  j \in \mathbb{Z}.$$
\end{lemma}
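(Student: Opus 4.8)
The plan is to build the common set of almost periods by intersecting three relatively dense sets and then using a standard argument that the intersection of finitely many relatively dense sets is again relatively dense — but the twist is that the three notions of "almost period" live in different spaces (an integer $q$ for the sequence $\{B_j\}$, a real $r$ for the function $f$, and a pairing condition $|(\tau_{k+q}-\tau_k)-r|<\epsilon$ linking the two). So first I would fix $\epsilon>0$ and choose $\delta \in (0,\epsilon)$ small, to be specified, and record three separate inclusion lengths: a length $l_1$ from almost periodicity of $\{B_j\}$ (every interval of length $l_1$ contains a $\delta$-almost period $q$ of $\{B_j\}$), a length $l_2$ from the fact that $\{\tau_j\}$ is a strongly almost periodic set — equivalently, since $\tau_k = ak+c_k$, from almost periodicity of the real sequence $\{c_k\}$ (every interval of length $l_2$ contains an integer $q'$ with $|c_{k+q'}-c_{k}|<\delta$ for all $k$, whence $|(\tau_{k+q'}-\tau_k)-aq'|<\delta$), and a length $l_3$ from W-almost periodicity of $f$ (every interval of length $l_3$ contains a real $\rho$ with $|f(t+\rho)-f(t)|<\delta$ whenever $|t-\tau_j|>\delta$ for all $j$).

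Next I would reconcile the integer $q$ coming from $\{B_j\}$ with the structure that makes $\{\tau_j\}$ and $f$ shift-compatible. The key observation is that $\{B_j\}$ and $\{c_j\}$ are both almost periodic sequences, so by the standard simultaneous-almost-period argument for sequences (intersection of two relatively dense subsets of $\Z$ is relatively dense, which works because such subsets have bounded gaps), there is a length $L$ such that every interval of length $L$ contains an integer $q$ that is simultaneously a $\delta$-almost period of $\{B_j\}$ and of $\{c_j\}$. For such a $q$ put $r := aq$; then automatically $|(\tau_{k+q}-\tau_k) - r| = |c_{k+q}-c_k| < \delta < \epsilon$ for all $k$, which handles the first two required inequalities. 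It remains to arrange the third, $|f(t+r)-f(t)|<\epsilon$ for $t$ bounded away from the $\tau_j$, using this specific $r=aq$ rather than an arbitrary $\rho$ supplied by $f$'s almost periodicity.

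The main obstacle is precisely this last step: the real number $r=aq$ is forced on us by the sequence $\{B_j\}$, so we cannot simply read it off from the almost periods of $f$. The remedy is to run the intersection argument one level deeper: the set $T_f := \{\rho \in \R : |f(t+\rho)-f(t)|<\delta \text{ for all } t \text{ with } \operatorname{dist}(t,\{\tau_j\})>\delta\}$ is relatively dense, and one shows that the set of integers $q$ with $aq \in T_{f,\eta}$ (a slightly enlarged version, tolerance $\eta$ in the shift, exploiting uniform continuity of $f$ on each $(\tau_{j-1},\tau_j)$ together with the hypothesis $\tau_{j+1}-\tau_j > 4\theta$, which guarantees the exceptional $\epsilon$-neighbourhoods of the $\tau_j$ do not overlap and that a small time-shift keeps us in the same continuity interval) is relatively dense in $\Z$; here is where the assumption $\tau_{j+1}-\tau_j>4\theta$ is used in an essential way, since it controls how the "bad set" $\{|t-\tau_j|\le \epsilon\}$ behaves under the shift by $r$. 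Intersecting this relatively dense set of integers with the relatively dense set of common $\delta$-almost periods of $\{B_j\}$ and $\{c_j\}$ from the previous paragraph yields, after choosing $\delta$ and $\eta$ small enough that $\delta+(\text{modulus of continuity terms}) < \epsilon$, a single relatively dense set of integers $q$; taking $l$ to be the corresponding inclusion length and $r=aq$ finishes the proof. I expect the bookkeeping in this final intersection — tracking how the $\epsilon$-exclusion zones around $\tau_j$ transform and ensuring all three tolerances add up to less than $\epsilon$ — to be the only genuinely delicate point, everything else being the routine "intersection of relatively dense sets" machinery.
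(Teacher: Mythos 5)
Your argument rests, at two separate places, on the claim that the intersection of relatively dense sets is again relatively dense (``intersection of two relatively dense subsets of $\Z$ is relatively dense, which works because such subsets have bounded gaps''). That claim is false: $2\Z$ and $2\Z+1$ are both relatively dense in $\Z$ with gap $2$, yet their intersection is empty. In your first use of it (common $\delta$-almost periods of $\{B_j\}$ and $\{c_j\}$) the conclusion happens to be a standard true fact, but it is proved by viewing $\{(B_j,c_j)\}$ as a single almost periodic sequence with values in $Y\times\R$, not by intersecting the two sets of almost periods. In the second, decisive use the same reasoning is fatal: even granting that the set $\{q:\ aq\in T_{f,\eta}\}$ is relatively dense in $\Z$ (which itself needs an argument -- it amounts to the nontrivial fact that the restriction of an almost periodic function to the arithmetic progression $a\Z$ has relatively dense return times), intersecting it with the relatively dense set of common almost periods of $\{B_j\}$ and $\{c_j\}$ proves nothing, and no tuning of the tolerances $\delta,\eta$ or bookkeeping of the exclusion zones around the $\tau_j$ can repair this. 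What is missing is a mechanism that makes the three objects \emph{jointly} almost periodic, so that a single relatively dense set of almost periods serves all of them simultaneously.

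That joint mechanism is exactly the content of the paper's proof: the sequence $\{B_j\}$ and the set $\{\tau_j\}$ are encoded into Bohr almost periodic ``saw'' functions $F_1(t)=\sum_j\phi(t-\tau_j)B_j$ and $F_2(t)=\sum_j\phi(t-\tau_j)$, with $\phi$ a tent of width $2\theta$ (the hypothesis $\tau_{j+1}-\tau_j>4\theta$ keeps the tents disjoint -- its role is quite different from the one you assign to it); $f$ is encoded by its Stepanov representation $t\mapsto f(t+\cdot)\in L_1([0,1],X)$; and the triple $F=(F_1,F_2,f(t+\cdot))$ is a single Bohr almost periodic function, whose $\delta$-almost periods $r$ are relatively dense by definition. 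From one such $r$ the integer $q$ is \emph{derived} (by matching the shifted and unshifted tents of $F_2$), giving $|(\tau_{k+q}-\tau_k)-r|<\delta\theta$, the bound on $|B_{k+q}-B_k|$ by evaluating $F_1$ at the points $\tau_j$, and the pointwise bound on $f$ away from the $\tau_j$ via the Stepanov--Wexler equivalence. Note that the order of quantifiers is the reverse of yours: $r$ need not equal $aq$, and $q$ is a function of $r$. Your $r=aq$ strategy is in principle salvageable -- one can show that $q\mapsto(B_q,c_q,f(\,\cdot+aq))$ is a jointly almost periodic sequence and take its common almost periods -- but that requires precisely the joint-almost-periodicity lemma your proposal replaces with an invalid intersection argument, and once supplied it essentially reconstructs the paper's proof.
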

\begin{proof} Consider $\phi(t)= \max\{0, 1-|t|/\theta\}$ and the following $Y$-valued `saw' function 
$$
F_1(t) = \sum_{j\in \Z} \phi(t-\tau_j)B_j, \ \tau_j = aj +c_j. 
$$
Note that for every fixed $t \in \mathbb{R}$, the sum above contains at most one non-zero term.
Take  any positive  $\epsilon$ smaller than  $\theta.$ Since the sequence $\{c_j,B_j\}$ of elements of $\R\times Y$ is almost periodic, there exists a relatively 
dense set $\mathcal T$ of integer $\epsilon-$almost periods for this sequence. If $p \in \mathcal T$, then  
\begin{eqnarray}
& & |F_1(t-ap)- F_1(t)| = \left| \sum_{j\in \Z} \phi(t-ap-\tau_j) B_j - \sum_{j\in \Z} \phi(t-\tau_{j+p})B_{j+p} \right| \nonumber\\
& & \le \sum_{j\in \Z} \phi(t-ap-\tau_j)|B_j-B_{j+p}|+  \sum_{j\in \Z} |\phi(t-ap-\tau_j)-\phi(t-\tau_{j+p})||B_{j+p}| \nonumber\\ 
& & < \epsilon(1+\sup_{j \in \Z}|B_j|/\theta). \label{f1}
\end{eqnarray}
Observe here that $|(t-ap-\tau_j)-(t-\tau_{j+p})| < \epsilon$ and therefore, for each fixed $t \in \mathbb{R},$  at most one term $|\phi(t-ap-\tau_j)-\phi(t-\tau_{j+p})|$ is different from zero. In addition, 
$$
|\phi(t-ap-\tau_j)-\phi(t-\tau_{j+p})| = \left|\int_{t-ap-\tau_j}^{t-\tau_{j+p}}\phi'(s)ds\right|< \frac\epsilon\theta. 
$$
By (\ref{f1}) the function $F_1$ is Bohr almost periodic.  Similarly, $F_2(t) =  \sum_{j\in \Z} \phi(t-\tau_j)$ is a scalar Bohr almost periodic function 
so that 
$F(t) = (F_1(t),F_2(t), f(t+\cdot))$ is $Y\times \R\times L_1([0,1],X)$-valued almost periodic function. 
Assume that $r$ is a $\delta-$ almost period of $F$.  Then we obtain
$$\int_t^{t+1}|f(s+r)-f(s)|ds< \delta, \quad t \in \R,$$
that implies, in view of \cite[Lemma 5]{Tr}, that, for some positive $\omega(\delta), \ \omega(0+)=0$, it holds
 $$|f(t+r) - f(t)| < \omega(\delta),\ t\in \R, \  |t - \tau_j| > \omega(\delta),\  j \in \mathbb{Z}.$$

Take some positive $\delta < \epsilon/(1+\sup_{j \in \Z}|B_{j}|+\theta)$ such that $\omega(\delta) < \epsilon$. Clearly,  $r$ is also $\delta-$ almost period of $F_2$ and therefore there exists 
integer $q$ such that $|(\tau_{k+q}-\tau_{k}) - r| < \delta\theta < \epsilon$ for all $k \in \Z$. In addition, $ |B_j- B_{j+q}|< \epsilon$ for all $j \in \Z$ since
 \begin{eqnarray*}
 & & \delta > |F_1(\tau_j)-F_1(\tau_j+r) |= |B_j- \phi(\tau_j+r-\tau_{j+q})B_{j+q}|  
 =|(B_j- B_{j+q}) \\
 & & + \frac{|\tau_j+r-\tau_{j+q}|}{\theta}B_{j+q}| \geq |B_j- B_{j+q}|- 
 \frac{|\tau_j+r-\tau_{j+q}|}{\theta}|B_{j+q}|\geq  |B_j- B_{j+q}|- \delta \sup_{j \in \Z}|B_{j}|.
  \end{eqnarray*}
Finally,   the conclusion of the lemma follows from the estimate $|r-qa| = |(\tau_{k+q}-\tau_{k})-qa - ((\tau_{k+q}-\tau_{k})-r) |< \epsilon + 2\sup|c_j|$ and  the relative density  on $\R$ of  the set of all $\delta$-almost periods $r$ of $F$.    \hfill $\square$
\end{proof}
\subsection{Semilinear impulsive systems in abstract spaces and the beating phenomenon.} \noindent 
Throughout the paper, given $\alpha \geq 0$ and $\rho >0$, we will assume the following hypotheses:  

\vspace{1mm}

\noindent ${\bf (H1)}$ $A:D(A)\subseteq X \to X$ is a sectorial operator and $\inf\,\{ Re \,\mu: \ \mu \in \sigma(A)\} \ge \delta > 0,$
where $\sigma(A)$ denotes the spectrum of $A.$
Consequently, the fractional powers of $A$ are well defined as well as the spaces  $X^\alpha = D(A^\alpha)$ endowed 
with the norms $|x|_\alpha = |A^\alpha x|.$   We set $U^\alpha_\rho = \{ x \in X^\alpha: \ | x|_\alpha \le \rho\}.$
\vspace{1mm}

\noindent  ${\bf (H2)}$ The function $A_1: \mathbb{R} \to L(X^\alpha,X)$ is Bohr almost periodic and Lipschitz continuous. 
\vspace{1mm}

\noindent  ${\bf (H3)}$ The functions $\tau_j: U^\alpha_\rho \to \mathbb{R}$ are such that $\tau_j(x)= aj+c_j(x)$, where $a>0$ and the sequence of continuous functions $\{c_j(x)\}$ is  almost periodic 
uniformly with respect to $x \in U^\alpha_\rho$.  Moreover,  there exists $\theta > 0$ such that $\inf_{U^\alpha_\rho} \tau_{j+1}(x) -
\sup_{U^\alpha_\rho} \tau_{j}(x) \ge \theta > 0,$ for all $j \in \mathbb{Z}.$ Thus,  for all $j \in \mathbb{Z}$, 
$$\sup_{U^\alpha_\rho}\tau_{j+1}(x) - \inf_{U^\alpha_\rho}\tau_j(x) \leq  \sup\{c_{j+3}(0)- c_{j}(0)\}+3a-2\theta:=\mathcal{Q}.$$ 

\noindent  ${\bf (H4)}$  The sequence $\{g_j(x)\}$ of continuous functions $g_j:U^\alpha_\rho \to X^\alpha$ is almost periodic uniformly with respect
to $x \in U^\alpha_\rho.$ In addition, $g_j(U^\alpha_\rho) \subset X^1$ for all $j \in \Z$. 
\vspace{1mm}

\noindent  ${\bf (H5)}$  The function $f: \ \mathbb{R} \times U^\alpha_\rho \to X$ is locally Lipschitzian   and is Bohr almost periodic in $t$
uniformly with respect to $x \in U^\alpha_\rho$. 
\vspace{1mm}

Under these hypotheses, $-A$ is the  infinitesimal generator of the analytic semigroup $S(t)=e^{-At}$ and 
 $e^{-At} A^\alpha x = A^\alpha e^{-At} x$ when  $x \in X^\alpha, t >0$.  The following inequalities are also valid  \cite{H}:
\begin{eqnarray*}
& & | A^\alpha e^{-A t} | \le C_\alpha t^{-\alpha} e^{-\delta t}, \ t > 0, \ \alpha > 0, \\[2mm]
& & | (e^{-A t} - I)x | \le \frac{1}{\alpha}C_{1-\alpha} t^\alpha |  A^\alpha x|, \quad t > 0,\ \alpha \in (0,1],\ x \in X^\alpha,\
\end{eqnarray*}
where $C_\alpha > 0$ is  bounded when $\alpha \to 0+.$ 

\begin{definition}
We say that  $u: [a, b] \to X^\alpha$ solves the initial value problem $u(a) = x_0 \in X^\alpha$
for the system (\ref{ban1}), (\ref{ban2}) on $[t_0, t_1]$ if $u(t)$ satisfies the initial condition and there 
exists a maximal  finite set of numbers $T_0=a< T_1<T_2< \dots <T_m =b$ such that 

(i) $u(t)$ is uniformly continuous on each interval $(T_j, T_{j+1})$ and $\tau_{m(j)}(u(T_j-))= T_j$ \ for $j= 1,\dots, m-1$ and appropriate $m(j)$
(i.e. the trajectory $(t,u(t))$ hits the union $\cup \Gamma_j$ of surfaces $ \Gamma_j: =\{(t,x): t= \tau_j(x), \ x \in U^\alpha_\rho\}$  {only} at the moments $T_j$).

(ii) $u: [a, b] \to X$ is continuously differentiable on each of the intervals $(T_0, T_1), \dots (T_{m-1}, T_m)$
and satisfies the equations (\ref{ban1}) and (\ref{ban2}) if $t \in (a,b), t \not= T_j$ and $t = T_j$,
respectively.

We assume that $u(t)$ is  left continuous so that  $u(T_j) = u(T_j - 0)$.
\end{definition}
Note  that each surface $\Gamma_j$ separates the cylinder $\R \times U^\alpha_\rho$ into two open parts, 
$$
W_j^+ =\{(t,x): t> \tau_j(x),  x \in U^\alpha_\rho\} , \quad W_j^- =\{(t,x): t< \tau_j(x),  x \in U^\alpha_\rho\}, 
$$
so that 
each trajectory $(t,u(t))$ either remains in one of these parts or intersects (hits) $\Gamma_j$ at least one time. We will assume 
that each trajectory intersects a surface $\Gamma_j$ at most one time. It implies that the beating phenomenon 
is excluded from our analysis. For impulsive systems in finite-dimensional spaces,  
there are several conditions allowing to
control the number of intersection of trajectories with $\Gamma_j$, see  \cite{LBS,SP,SPT}.  
Some of them can be extended for the abstract parabolic evolution systems: 
\begin{lemma} \label{L3v} In addition to the smoothness conditions imposed in  ${\bf (H2)}$ - ${\bf (H5)}$, 
suppose that  the function $F: \ \mathbb{R} \times X^1 \to X^{1-\alpha}, \ F(t,x)=-A_1(t)x+f(t,x)$, is well defined and 
locally Lipschitzian. Assume also  that, for some fixed $j$ and all $x \in U^\alpha_\rho\cap X^1$, it holds 
$$\theta_j(x):= \tau_j(x+g_j(x)) - \tau_j(x) \leq 0.$$ 
If there exists a differentiable extension $\tau_j: U^0_\varrho \to \R$, $\varrho = \sup_{U^\alpha_\rho}|x|$, of
the mapping $\tau_j: U^\alpha_\rho \to \mathbb{R}$  such that ${\partial\tau_j}/{\partial x}: U^\alpha_\rho\cap X^1\to X^*$ is 
continuous when $X^*$ is provided with weak-* topology and 
\begin{equation}\label{pd}
\frak{P}(x):=\frac{\partial\tau_j(x)}{\partial x}(-Ax+F(\tau_j(x), x))<1, \quad \mbox{for all} \ x \in U^\alpha_\rho\cap X^1, 
\end{equation}
then the graph of each solution $u(t)$  of system (\ref{ban1}), (\ref{ban2}) can intersect the surface $\Gamma_j$ at most once. 
\end{lemma}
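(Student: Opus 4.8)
The plan is to monitor, along a given solution $u$ of (\ref{ban1}), (\ref{ban2}), the sign of the scalar switching function
\[
V(t):=\tau_j(u(t))-t ,
\]
where $\tau_j$ is read through its differentiable extension to $U^0_\varrho$. Since $W_j^-=\{(t,x):t<\tau_j(x)\}$ and $W_j^+=\{(t,x):t>\tau_j(x)\}$, being in $W_j^-$ means $V>0$, being in $W_j^+$ means $V<0$, and meeting $\Gamma_j$ means $V=0$; thus the lemma reduces to showing that $V$ vanishes at most once along $u$. In fact I would establish the stronger fact that the set $\{t:V(t)\le 0\}$ is forward invariant and that $V$ cannot return to $0$ once it has become negative.

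First, the motion between impulses. On each open inter-impulse interval $(T_i,T_{i+1})$ the solution satisfies the parabolic equation, so parabolic smoothing gives $u(t)\in X^1$ there, and $u$ is $C^1$ into $X$ with $u'(t)=-Au(t)+F(t,u(t))\in X$ (here $F(t,u(t))\in X^{1-\alpha}$ and $-Au(t)\in X$). I would then differentiate $\tau_j\circ u$ by the chain rule, using that $x\mapsto\partial\tau_j(x)/\partial x$ is continuous from $U^\alpha_\rho\cap X^1$ into $X^*$ with its weak-$*$ topology; this is exactly where the extension of $\tau_j$ to the larger ball $U^0_\varrho$ — with derivative in $(X)^*$ rather than $(X^\alpha)^*$ — is needed, so that the pairing of $\partial\tau_j/\partial x$ with the $X$-valued vector $-Au(t)$ makes sense. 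This yields
\[
V'(t)=\frac{\partial\tau_j(u(t))}{\partial x}\bigl(-Au(t)+F(t,u(t))\bigr)-1 ,
\]
and at any zero of $V$, where $t=\tau_j(u(t))$ and hence $F(t,u(t))=F(\tau_j(u(t)),u(t))$, one gets $V'(t)=\frak{P}(u(t))-1<0$ by (\ref{pd}). Therefore, on an inter-impulse interval $\Gamma_j$ is met only transversally, in the direction of decreasing $V$: the continuous part of the motion can never take the trajectory from $W_j^+$ back to $W_j^-$, and $V$ has at most one zero per such interval, positive before it and negative after it.

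Next, the impulses. If $u$ meets $\Gamma_j$ at some $T$, then just after the jump $V(T+0)=\tau_j(u(T)+g_j(u(T)))-\tau_j(u(T))=\theta_j(u(T))\le 0$ by hypothesis, so a $\Gamma_j$-impulse keeps the trajectory in $\{V\le 0\}$ (and if $\theta_j(u(T))=0$, the right derivative of $V$ at $T$ equals $\frak{P}(u(T+0))-1<0$, so $V$ becomes strictly negative immediately). A jump from another surface $\Gamma_k$, $k\ne j$, cannot change the side of $\Gamma_j$ at all: writing $\tau_i(x)=ai+c_i(x)$ with $|c_i(x)|\le M:=\sup_i\sup_{U^\alpha_\rho}|c_i|$ and noting that \textbf{(H3)} forces $a-2M\ge\theta$, one has, with $u^\pm$ the pre/post-jump states and $T=\tau_k(u^-)$,
\[
V(T\pm 0)=\tau_j(u^\pm)-\tau_k(u^-)\in\bigl[a(j-k)-2M,\ a(j-k)+2M\bigr],
\]
which is $\le-\theta$ when $k>j$ and $\ge\theta$ when $k<j$; in particular such an impulse leaves $|V|\ge\theta$ and does not change the sign of $V$. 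Combining the three observations, a solution can leave $W_j^-$ only by meeting $\Gamma_j$ once, transversally, after which $V\le 0$ and in fact $V<0$ at all later times — the continuous flow and the $\Gamma_k$-impulses with $k\ne j$ preserve this, and no further $\Gamma_j$-impulse is possible since it would require $V=0$ again. Hence the graph of $u$ meets $\Gamma_j$ at most once.

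The step I expect to be the main obstacle is the regularity at the endpoints of the inter-impulse intervals: a priori the one-sided limits $u(T_i\mp 0)$ of a solution are only known to lie in $X^\alpha$, whereas the transversality inequality (\ref{pd}) and the sign condition on $\theta_j$ are available only on $U^\alpha_\rho\cap X^1$. One therefore has to propagate the strict inequality $V'<0$ from the interior of each interval (where $u(t)\in X^1$ by smoothing) to these boundary instants, using the continuity of $\frak{P}$, the continuity of $u$ into $X^\alpha$, and the monotonicity of $V$ near a zero; this is what excludes the degenerate possibility that $V$ approaches $0$ tangentially from the side $W_j^+$ precisely at an impulse moment, which is the only way the ``at most once'' conclusion could fail.
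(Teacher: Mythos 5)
Your argument is correct and is essentially the paper's own proof: both monitor the scalar switching function ($V(t)=\tau_j(u(t))-t$ in your notation, $\zeta=-V$ in the paper's), use the transversality condition (\ref{pd}) to show that along the continuous flow this function can only cross zero in one direction (so the trajectory cannot return from $W_j^+$ to $\Gamma_j$), and use $\theta_j\le 0$ together with the $X^1$-regularity of $x_0+g_j(x_0)$ and parabolic smoothing to show the jump lands in $\overline{W_j^+}$ and is immediately pushed strictly inside. The only differences are cosmetic: you additionally make explicit that impulses on $\Gamma_k$, $k\ne j$, cannot change the sign of $V$ (which the paper leaves implicit, and which follows directly from the separation $\sup\tau_j-\inf\tau_{j+1}\le-\theta$ in \textbf{(H3)} rather than from your intermediate bound $a-2M\ge\theta$), and you flag rather than fully execute the endpoint-regularity step that the paper handles by running the transversality argument at the hypothetical return time $t_1$, where $u(t_1)\in X^1$ by smoothing.
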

\begin{proof} Below, we invoke  the following result from  \cite[Section 3.5, Exercise 1]{H}: suppose that $A$ is sectorial  and 
$F: \mathbb{R} \times X^1 \to X^{1-\alpha}$ is Lipschitzian in a neighborhood $(t_0, x_0)$ for some $\alpha \in [0,1)$. If $x_0 \in X^1,$
then there exists a unique solution $x(t)$ of the initial value problem  $x'(t) + Ax = F(t,x),$ $t > t_0, \ x(t_0) = x_0,$ 
on some interval $t_0 \le t \le t_1$ and $t \to x(t) \in X^1$ is continuous.

Now, suppose that the graph of some solution $u(t)$ to  (\ref{ban1}), (\ref{ban2}) intersects the surface $\Gamma_j$ at 
the point $(t_0, x_0)$ where  $t_0=\tau_j(x_0)$. Then $x_0, x_0+g_j(x_0)  \in U^\alpha_\rho\cap X^1$ and  inequality
$\tau_j(x_0+g_j(x_0)) \leq t_0$ implies that the point $(t_0, x_0+g_j(x_0))$ either belongs to $W_j^+$ or it lies on $\Gamma_j$.  
Since the map $F: \ \mathbb{R} \times X^1 \to X^{1-\alpha}$ is locally Lipschitzian, the  solution $u:[t_0,t_0+\nu] \to X^1$ of the initial 
value problem $u(t_0)= x_0+g_j(x_0)$ for  (\ref{ban1}) is continuous for some small positive $\nu$. 
Now,  if  $\tau_j(x_0+g_j(x_0)) = t_0$ then,  in virtue of our smoothness assumptions,  $\zeta(t):=t- \tau_j(u(t))$ is continuously differentiable 
on $(t_0,t_0+\nu]$ and $\zeta(t_0+) =0$. Moreover, using (\ref{pd}), we obtain that $\zeta'(t_0+) >0$. Hence,  $\zeta(t) >0$ and $(t,u(t)) \in W_j^+$ for small positive $t-t_0$. 
Obviously, the same happens when $\tau_j(x_0+g_j(x_0)) < t_0$.  Now, suppose that there exists  finite $t_1>t_0$ such that  $\zeta(t) >0$
for $t\in (t_0,t_1)$ and $\zeta(t_1)=0$.  Then $u(t_1)\in U^\alpha_\rho\cap X^1$ and therefore (\ref{pd}) yields  
$\zeta'(t_1) >0$, a contradiction. Thus $\zeta(t) >0$ for all admissible  $t>t_0$ that proves the lemma. 
\hfill $\square$
\end{proof} 
\begin{example}\label{EXA4} Consider the diffusive logistic equation with impulses at state dependent moments \begin{eqnarray}
& & u_t = u_{\xi\xi} + a(t)u(1 - b(t)u), \quad t >0,\ t \not= \tau_j(u(t,\xi)), \ \xi\in (0,l), \label{ex1} \\[2mm]
& &  u(t+0,\xi) =u(t,\xi) + \int_0^lK_j(\xi,\zeta)I_j(u(t,\zeta))d\zeta+ d_j(\xi),\quad  t = \tau_j(u(t,\xi)), \label{ex2} %\Delta u\Biggl|_{t = \tau_j(u)} =
\end{eqnarray}
subjected to the  boundary conditions
%\begin{eqnarray} \label{ex3}
$u(t,0) = u(t,l) = 0.$
%\end{eqnarray}
Here $a,b:\R\to \R_+$ are bounded Lipschitz continuous functions and the sequence of surfaces $t=\tau_j(u)$ 
is defined by
$$
\tau_j(u) = t_j +  b_j \int_0^l u^2(\xi) d\xi, \quad b_j \leq 0, 
$$
where   real numbers $t_j$ satisfy  $t_{j+1} - t_j \ge \theta_* > 0$ for some $\theta_*$. 
We will assume that $d_j \in  H^2(0,l)\cap H^1_0(0,l),$ $d_j \geq 0,$ and that $K_j:[0,l]^2 \to \R_+$ are $C^2$-smooth functions such that $K_j(0,\zeta)=K_j(l,\zeta)=0$ for all $\zeta \in [0,l]$.  Next, we assume that $I_j:\R\to \R_+, \ I_j(0)=0,$ are globally Lipschitzian functions, with the Lipshchitz constants $\Lambda_j$. The above positivity  assumptions are related to the possible biological interpretation of  the quantity $u(t, x)$ as the 
number of individuals of a single species population per unit area at point $x$ and time $t$, cf. \cite{RogovY}.
Now, for $\rho>0$, consider $$X = L_2(0,l), \quad A = - \frac{\partial^2}{\partial \xi^2}, \quad A_1(t)x = -a(t)(1-\rho b(t))x, \quad X^1 = D(A) = H^2(0,l)\cap H^1_0(0,l).$$
It is well known (e.g. see \cite{H}) that 
the operator $A:D(A) \subset X \to X$ is sectorial and 
$X^{1/2} = D(A^{1/2}) = H^1_0(0,l).$ Hence,  it is convenient to take $\alpha =1/2$. Then $F(t,u):= a(t)u(1 - b(t)u)$ maps $\R\times X^1$ into $X^{1/2}$ and is locally Lipschitizian. We also have that 
${\partial\tau_j(u)}/{\partial u}=   2b_j u,$ so that 
${\partial\tau_j}/{\partial u}: X\to X^*=X$ is obviously continuous (even we consider the norm topology in $X^*$). 
For every  initial condition $u(t_0,\xi) = u_0(\xi)$, $u_0 \in H^1_0(0,l)$, the proposed  boundary value problem for equation  (\ref{ex1}) has a unique local classical solution $u(t,\xi)$. By the maximum principle, $u_0(\xi) \geq 0$ implies that $u(t,\xi)$ is a global solution and $u(t,\xi)\geq 0$ for all $t >t_0, \xi \in (0,l)$.  
Now, we claim that if  $b_j\leq 0$, then for each $\rho>0$ there exists $\beta_0= \beta_0(\rho,a(\cdot), b(\cdot))>0$ such that non-negative solutions $u(t,\xi)$ of (\ref{ex1}), (\ref{ex2}) with values in  $U^{1/2}_\rho$ can intersect each surface $\Gamma_j$ at most one time whenever $|b_j| \leq \beta_0, \ j \in \Z$ (clearly, $u(t,\xi)\geq 0$ if $u_0(\xi)\geq 0$). 
Indeed,  obviously $\theta_j(u) \leq 0$ for all $u\geq 0$ while 
$$
\frak{P}(u) = -2b_j \int_0^l u_\xi^2(\xi) d\xi+2b_ja(\tau_j(u))\int_0^lu^2(\xi)(1-b(\tau_j(u))u(\xi))d\xi \leq 
$$
$$
\leq 2\sup|b_j|(1+\sup [a(t)b(t)])(|u|^2_{1/2}+\sqrt{l}|u|^3_{1/2})\leq  2\sup|b_j|(1+\sup [a(t)b(t)])(\rho^2+\sqrt{l}\rho^3) <1
$$
whenever  $\sup_j |b_j| \leq \beta_0= 0.5((1+\sup [a(t)b(t)])(\rho^2+\sqrt{l}\rho^3))^{-1}$.   
\end{example}

\subsection{Linear impulsive systems in abstract spaces: exponential dichotomy and the Green function.}
\label{SGF}
\noindent 
Under the hypotheses assumed in the previous subsection and  for each  $0 \le \gamma < 1$,
the linear homogeneous equation
\begin{eqnarray} \label{lin1}
& & \frac{dx}{dt} + (A + A_1(t))x = 0
\end{eqnarray}
defines a strongly continuous family of evolution operators $U(t,s):X^\gamma \to X^\gamma, \   t \geq s,$  (hence, 
$U(\tau,\tau) = I, \ U(t,s)U(s,\tau) = U(t,\tau), \ t\ge s \ge \tau$).  See \cite[Theorem 7.1.3]{H} for more detail and the proof of the following inequalities
$$
| U(t,\tau)x|_\gamma \le C(t - \tau)^{(\nu - \gamma)_-}|x|_\nu, \quad 0\leq \nu  \leq 1,  \ x \in X^1, 
$$
\begin{eqnarray} \label{evop2}
| U(t,\tau)x - x|_\gamma \le C(t - \tau)^{\nu}|x|_{\gamma+\nu}, \quad \nu > 0,\   \gamma + \nu \le 1,\  x \in X^1, 
\end{eqnarray}
where  $t - \tau \le Q, \ C = C(Q,\nu,\gamma)$, and
$(\nu - \gamma)_- = \min(\nu - \gamma, 0).$
Suppose now that $u(t) \in X^\alpha, \ t \in \R,$ is a bounded solution of the equation (\ref{ban1}), then $u(t)$  solves   
linear inhomogeneous  equation obtained  from (\ref{lin1}) by replacing $0$ in the right-hand side with bounded perturbation 
$f(t)= f(t,u(t))$. It is well known that the exponential dichotomy is the right concept when analysing the relationship between $u(t)$ and $f(t)$: 

\begin{definition}\label{defn:dich} %\rm
We say that  (\ref{lin1}) has an exponential dichotomy on $\mathbb{R}$ in the space $X^\alpha$
 with exponent $\beta > 0$ and bound $M \ge 1$ if there exist projections $P(t):X^\alpha\to X^\alpha$, $\frak{R}(t):= P(t)X^\alpha$, 
such that
\vspace{2mm}

(i) $U(t,s)P(s) = P(t)U(t,s), \ t \ge s$;
\vspace{2mm}

(ii) $U(t,s): \frak{R}(s) \to \frak{R}(t)$ is invertible for  $t \ge s$, with the inverse denoted as   
$U(s,t)$;
\vspace{2mm}

(iii) $|U(t,s)(I - P(s))x|_\alpha \le M e^{-\beta(t-s)}|x|_\alpha, \ t \ge s, \ x \in X^\alpha;$
\vspace{2mm}

(iv) $|U(t,s)P(s)x|_\alpha \le M e^{\beta(t-s)}|x|_\alpha, \ t \le s, \ x \in X^\alpha.$
\end{definition}
Due to \cite[Section 7.6]{H}, the exact value of $\alpha \in [0,1)$ is not relevant in the 
above definition:  equation (\ref{lin1}) has an exponential dichotomy in the space $X^\alpha$ if, and only if, it
has exponential dichotomy in the space $X^\gamma$ for any $0 \le \gamma < 1$.

To each exponentially dichotomic system, we associate the Green function
\begin{eqnarray*}
G(t,s) =
 \left\{\begin{array}{l}
            U(t,s)(I - P(s)), \ t > s,\\
            -U(t,s)P(s), \ t \leq s. 
          \end{array} \right.
\end{eqnarray*}
Clearly, $
|G(t,s)x|_\alpha \le M e^{-\beta|t-s|}|x|_\alpha, \ t,s \in \mathbb{R}. 
$
In fact, this estimate admits the following extension (see  \cite[Lemma  7.6.2]{H}): 
there is a real number $M_1\geq M$ (depending on $\alpha, \delta, \gamma$) such that
\begin{eqnarray} \label{inlin6}
|U(t,s)P(s)x|_\gamma \le M_1 e^{-\beta(s-t)}|x|_\delta, \ s \ge t,
\end{eqnarray}
when $0 \le \gamma < 1, \ \delta \ge 0,$ and, for $0 \le \delta \le \gamma < 1,\ t >s,$ $\psi_{\gamma-\delta}(t-s):= 1+(t-s)^{\delta-\gamma}$, 
\begin{eqnarray} \label{inlin7}
|U(t,s)(I - P(s))x|_\gamma \le M_1 e^{-\beta(t-s)} \psi_{\gamma-\delta}(t-s)|x|_\delta, \quad t > s. 
\end{eqnarray}
It is also convenient to set $\psi_{\gamma}(s) =1$ for $s \leq 0$. 
% 
%Assume that $A$ is sectorial in $X, 0 \le \alpha < 1$ and $t \to A_1(t): \ [t_0, t_1] \to L(X^\alpha, X)$ is H\"{o}lder %continuous, and suppose $t_0 \le \tau < t_1, \ x_0 \in X, \ f: (\tau, t_1) \to X$ is locally H\"{o}lder continuous with
%$\int_{\tau}^{\tau+\rho}\|f(s)\|ds < \infty$ for some $\rho > 0.$ Then by \cite{H}, Theorem 7.1.4, there exists a %unique solution of 
%$dx/dt + (A + A_1(t))x = f(t), \ \tau < t < t_1, \ x(\tau) = x_0,$ namely $x(t) = U(t, \tau)x_0 + \int_{\tau}^t U(t,s)f(s)ds.$
 
Consequently, if $f:\R\to X$ is a locally H\"older continuous bounded function  and sequence $\{g_j\}, \ g_j \in X,$ is bounded, 
then  the following  left-continuous function is well defined:
\begin{equation} \label{inlin3}
u_0(t) = \int_{-\infty}^{\infty}G(t,v)f(v)dv + \sum_{j \in \mathbb{Z}}G(t, \bar\tau_j)g_j,\quad \mbox{where} \ \bar\tau_j = \tau_j(0).
\end{equation}
A straightforward verification shows  that $u(\bar\tau_j + 0) - u(\bar\tau_j) =  g_j$ and that, for each  pair of points $s<t$ taken in  
$(\bar \tau_j, \bar \tau_{j+1})$,  it holds 
$$ 
u_0(t) = U(t,s)u_0(s) + \int_s^tU(t,v)f(v)dv
$$
and therefore $u_0(t)$ satisfies  the both equations of the impulsive system
\begin{eqnarray} \nonumber
& & \frac{dx}{dt} + (A + A_1(t)) x = f(t), \quad t \not= \bar\tau_j, \\[2mm]
& & x(\bar\tau_j + 0) - x(\bar\tau_j) =  g_j, \quad j \in \mathbb Z. \nonumber
\end{eqnarray}
Actually $u_0:\R\to X$ is the unique bounded solution of  the above system: if $u_1(t)$ were another bounded 
solution of this impulsive system, then continuous function $u_*(t)=u_1(t)-u_0(t)$
(observe that $u_*(\bar\tau_j + 0)=u_*(\bar\tau_j)$ since $u_1(\bar\tau_j + 0) - u_1(\bar\tau_j) = u_0(\bar\tau_j + 0) - u_0(\bar\tau_j)=  g_j$)
would be a non-zero bounded solution of the  
exponentially dichotomic homogeneous equation (\ref{lin1}) that clearly is not possible.   
 
In the next section, we will use the following property of the Green function: 
\begin{lemma}  \label{in2} Suppose that  $0 \le \delta \le \alpha < 1$, $h \in \R$. 
Then 
$$
|(G(t+h,\tau+h) - G(t,\tau))x|_\alpha \le M_2 e^{-\beta_1|t-\tau|}\psi_{\alpha-\delta}(t-\tau) a_*(h)|x|_\delta, \ x \in X^\delta,\ t\not=\tau,
$$
with some positive constants $\beta_1 \le \beta, M_2 \ge M$ and $a_*(h) = \sup_{s\in\R}|A_1(s) - A_1(s+h)|_{L(X^\alpha,X)}.$
\end{lemma}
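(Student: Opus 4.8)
The plan is to estimate the difference $G(t+h,\tau+h)-G(t,\tau)$ by realizing it via the variation-of-constants formula for the shifted equation and comparing it with the original one. Fix $h\in\R$ and set $V(t,s):=U(t+h,s+h)$; this is the evolution family of the shifted linear equation $\dot x+(A+A_1(t+h))x=0$. Since $A_1(\cdot)$ is Bohr almost periodic and Lipschitz, a standard roughness argument for exponential dichotomies (see \cite[Section 7.6]{H}) shows that for $a_*(h)$ small the shifted equation is again exponentially dichotomic; however we do not want to assume $a_*(h)$ small, so instead I would work directly with the fixed dichotomy of \eqref{lin1} and treat $A_1(t+h)-A_1(t)$ as a perturbation acting on the \emph{already dichotomic} system. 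The key identity is, for $t>\tau$ (the case $t\le\tau$ being symmetric),
\begin{equation*}
U(t+h,\tau+h)x-U(t,\tau)x=\int_\tau^t U(t,v)\bigl(A_1(v)-A_1(v+h)\bigr)U(v+h,\tau+h)x\,dv,
\end{equation*}
which follows by differentiating $v\mapsto U(t,v)U(v+h,\tau+h)x$ and integrating; one must justify the computation on $X^1$ first and then extend by density using \eqref{evop2}.

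Next I would split $G$ into its two branches and insert the projections. For the difference of the stable branches ($t>\tau$), write
\begin{align*}
G(t+h,\tau+h)x-G(t,\tau)x
&=\bigl(U(t+h,\tau+h)-U(t,\tau)\bigr)(I-P(\tau))x\\
&\quad +\bigl(G(t+h,\tau+h)x-U(t+h,\tau+h)(I-P(\tau))x\bigr).
\end{align*}
The first term is handled by the integral identity above together with the dichotomy estimates \eqref{inlin6}, \eqref{inlin7}: bound $|U(t,v)(\cdot)|_\alpha$ by $M_1e^{-\beta(t-v)}\psi(t-v)$, the middle factor by $a_*(h)$, and $|U(v+h,\tau+h)(I-P(\tau))x|_\alpha$ by the dichotomy constant for the shifted family (or, preferably, reorganize so only quantities of the original family appear). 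The second term measures how much the shifted Green function differs from merely shifting the projection; here one uses the commutation property $U(t+h,\tau+h)P(\tau+h)=P(t+h)U(t+h,\tau+h)$ and the fact that $P(s+h)-P(s)$ is again controlled by $a_*(h)$ via the roughness of dichotomies. The exponential weight $e^{-\beta(t-\tau)}$ in each factor, combined with the integrable singularity of $\psi_{\gamma-\delta}$ at $0$ and the boundedness of $\psi$ away from $0$, yields after performing the $v$-integral a bound of the form $C\,e^{-\beta_1|t-\tau|}\psi_{\alpha-\delta}(t-\tau)\,a_*(h)$ for any $\beta_1<\beta$ (one loses a bit of the exponent to absorb the polynomial factors accumulated from the convolution of two $\psi$'s, which is why the statement only claims $\beta_1\le\beta$). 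Finally I would set $M_2$ to be the maximum of $M$ and all constants produced along the way, and record $\beta_1$.

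The main obstacle is the second term, i.e. controlling the difference of the spectral projections $P(s+h)$ and $P(s)$ of the original versus the shifted equation in the operator norm on $X^\alpha$, uniformly in $s$. This is exactly a quantitative roughness statement for exponential dichotomies of parabolic evolution systems: a small change $a_*(h)$ in $A_1$ produces an $O(a_*(h))$ change in the dichotomy projections. I expect to obtain this by the classical fixed-point construction of the stable/unstable manifolds (expressing $P(s+h)$ as the value at $s$ of the bounded solution operator of the perturbed system, then subtracting the two Green-function representations as in the uniqueness argument given just above Lemma \ref{in2} in the text), which reduces the projection difference to an integral of $G(t,v)(A_1(v)-A_1(v+h))(\cdots)$ of precisely the same type as the first term — so the estimate closes on itself. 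A secondary technical point is the interplay of the different exponents $\delta\le\alpha$ in \eqref{inlin6}--\eqref{inlin7}: one must route the smoothing through the correct space so that the input $|x|_\delta$ and the output $|\cdot|_\alpha$ match, and check that $\psi_{\alpha-\delta}$ (with the convention $\psi_\gamma(s)=1$ for $s\le0$) is the right envelope after integration; this is routine but must be done carefully to land exactly the claimed inequality.
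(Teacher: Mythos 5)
There is a genuine gap, and it sits exactly where you flagged it: the uniform estimate $|P(s+h)-P(s)|_{L(X^\alpha)}\le C\,a_*(h)$ is not an auxiliary technicality but the whole content of the lemma on the diagonal (note $G(s,s)=-P(s)$, so the claimed inequality at $t\to\tau$ \emph{is} the projection estimate). Your plan defers it to a roughness theorem for exponential dichotomies, but roughness theorems give closeness of projections only when the perturbation $a_*(h)$ is small, and you explicitly (and correctly) refuse to assume that; your fallback — ``the estimate closes on itself'' via the bounded-solution representation — is precisely the step you would have to carry out, and it is not carried out. Moreover, your ``first term'' secretly needs the same estimate: in $\int_\tau^t U(t,v)(A_1(v)-A_1(v+h))U(v+h,\tau+h)(I-P(\tau))x\,dv$ the vector $(I-P(\tau))x$ need not lie in the stable subspace of the \emph{shifted} family, so $|U(v+h,\tau+h)(I-P(\tau))x|_\alpha$ has no exponential decay until you replace $P(\tau)$ by $P(\tau+h)$ and again control the difference. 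So as written the argument is circular at its core.

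You also miss a simplification that dissolves the ``main obstacle'': the shifted equation $\dot x+(A+A_1(t+h))x=0$ is just a time-translate of \eqref{lin1}, so it is \emph{automatically} exponentially dichotomic with the same constants, evolution family $U(t+h,s+h)$, projections $P(s+h)$ and Green function $G(t+h,s+h)$ — no perturbation theory is involved. The paper exploits this by comparing the two Green functions through the uniqueness of the bounded solution of the inhomogeneous equation: for compactly supported H\"older $\xi$, the function $u(t)=\int G(t+h,s+h)\xi(s)\,ds$ also solves $\dot u+(A+A_1(t))u=\xi(t)+(A_1(t)-A_1(t+h))u(t)$, whence by uniqueness and Fubini one gets the single identity
\begin{equation*}
G(t+h,\tau+h)-G(t,\tau)=\int_{-\infty}^{\infty}G(t,s)\bigl(A_1(s)-A_1(s+h)\bigr)G(s+h,\tau+h)\,ds ,
\end{equation*}
valid with no smallness assumption and encoding the projection difference and the evolution-operator difference simultaneously (the integral runs over all of $\R$, not just $[\tau,t]$). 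After that, only the convolution estimates with \eqref{inlin6}--\eqref{inlin7} remain, and your remarks about losing a bit of the exponent to absorb the $\psi$-factors are accurate for that part. If you replace your projection-splitting scheme by a derivation of this identity, your proof becomes the paper's proof; without it, the proposal does not close.
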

\begin{proof}
Clearly, for each fixed $h \in \mathbb{R}$, $G(t+h,s+h)$ is the Green function for  the shifted equation
$u'(t) + (A + A_1(t+h))u(t) = 0.$ Consequently,  for each compactly supported locally H\"older continuous function $\xi:\R\to  X$,  the linear inhomogeneous equation
$u'(t) + (A + A_1(t+h))u(t) = \xi(t)$ has a unique bounded solution $u(t,\xi) = \int_{-\infty}^\infty G(t+h,s+h)\xi(s) ds$.  Since $u(t,\xi)$ is also a unique bounded solution of the  equation $u'(t) + (A + A_1(t))u(t) = r(t,\xi)$ with locally H\"older continuous bounded function  $r(t,\xi)= \xi(t)+(A_1(t)-A_1(t+h))u(t,\xi)$, we find that 
$$
u(t,\xi) = \int_{-\infty}^\infty G(t+h,s+h)\xi(s) ds = \int_{-\infty}^\infty G(t,s)r(s,\xi)ds. 
$$
Thus, invoking the Fubini theorem for  abstract integrals \cite{HPh}, we obtain that 
$$
0= \int_{-\infty}^\infty \left[(G(t+h,s+h)-G(t,s)) \xi(s)-G(t,s)(A_1(s)-A_1(s+h)) \int_{-\infty}^\infty G(s+h,u+h)\xi(u) du \right]ds
$$
$$
= \int_{-\infty}^\infty \left[G(t+h,s+h)-G(t,s)-\int_{-\infty}^\infty G(t,u)(A_1(u)-A_1(u+h)) G(u+h,s+h)du\right]\xi(s)ds.
$$
Since compactly supported locally H\"older continuous  function $\xi(s)$ is arbitrary,  this implies that   
\begin{eqnarray} \label{Ge}
G(t+h,\tau+h) - G(t,\tau) = \int_{-\infty}^{\infty}G(t,s)(A_1(s) - A_1(s+h))G(s+h,\tau+h)ds.
\end{eqnarray} 
Note that relation (\ref{Ge}) was proposed as an exercise  in \cite[Section 7.6]{H}. 
Hence, in view of  (\ref{inlin6}), (\ref{inlin7}), we obtain, for  $t < \tau$,  the following estimates for the right-hand side of (\ref{Ge}): 
\begin{eqnarray*} %\label{in2}
& &  \hspace{-7mm} |(G(t+h,\tau+h) - G(t,\tau))x|_\alpha \le
\Biggl(\int_{\tau}^{\infty} e^{-\beta(s-t)} e^{-\beta(s-\tau)}\psi_{\alpha-\delta}(s-\tau)ds  \\
& & \hspace{-7mm} + \int^{\tau}_{t}e^{-\beta(s-t)} e^{-\beta(\tau-s)}ds +
\int_{-\infty}^t e^{-\beta(t-s)} \psi_{\alpha}(t-s)e^{-\beta(\tau-s)}ds\Biggl)M_1^2 a_*(h) |x|_\delta \\
& & \hspace{-10mm}=\Biggl(e^{\beta(t+\tau)}\int_{\tau}^{\infty} e^{-2\beta s}\psi_{\alpha-\delta}(s-\tau)ds +
e^{-\beta(\tau - t)}(\tau - t) + e^{-\beta(t+\tau)}\int_{-\infty}^t e^{2\beta s} \psi_{\alpha}(t-s)ds\Biggl)M_1^2 a_*(h) |x|_\delta  \\
& & \le
M_2 e^{-\beta_1(\tau-t)}a_*(h) |x|_\delta.
\end{eqnarray*}
Similarly, if $t > \tau$ then
\begin{eqnarray*}
& & |(G(t+h,\tau+h) - G(t,\tau))x|_\alpha \le
\Biggl(\int_{t}^{\infty} e^{-\beta(s-t)} e^{-\beta(s-\tau)}\psi_{\alpha-\delta}(s-\tau)ds  \\
& & + \int_{\tau}^{t}e^{-\beta(t-s)}\psi_{\alpha}(t-s) e^{-\beta(s-\tau)}\psi_{\alpha-\delta}(s-\tau)ds  + \int_{-\infty}^{\tau} e^{-\beta(t-s)} \psi_{\alpha}(t-s)e^{-\beta(\tau-s)}ds\Biggl)M_1^2 a_*(h) |x|_\delta  \\
& &  =\Biggl(e^{\beta(t+\tau)}\int_{t}^{\infty} e^{-2\beta s}\psi_{\alpha-\delta}(s-\tau)ds +
e^{-\beta(t-\tau)}\int_{\tau}^{t}\psi_{\alpha}(t-s) \psi_{\alpha-\delta}(s-\tau)ds  \\
& & + e^{-\beta(t+\tau)}\int_{-\infty}^{\tau} e^{2\beta s} \psi_{\alpha}(t-s)ds\Biggl)M_1^2 a_*(h) |x|_\delta \le
M_2 e^{-\beta_1(t-\tau)}\psi_{\alpha-\delta}(t-\tau)a_*(h) |x|_\delta.
 \end{eqnarray*}
 Observe that, in order to estimate the integral $\int_{\tau}^{t}\psi_{a}(t-s) \psi_{b}(s-\tau)ds$, we are using the relation 
 $\int_{\tau}^{t} (t-s)^{-a} (s-\tau)^{-b}ds = (t-\tau)^{1-a-b}B(1-b,1-a)$ where $B$ is the beta function.  The above estimations also justify the use of the Fubini theorem. 
 \hfill $\Box$
\end{proof}

%%%%%%%%%%%%%%%%%%%%%%%%%%%%%%%%%%%%%%%%%%%%%%%%%%%%%%%%%%%%%%%%%%%%%%%%%%%%%%%%%%%%%%%%%%%%%%%%%%%%%%%%%%%%%%%%%%

\section{Almost periodic solutions of abstract parabolic differential equation with impulses}
\noindent At first, we study linear almost periodic impulsive systems:
\begin{thm}\label{T7}
Assume  the hypotheses ${\bf (H1)}$ -- ${\bf (H4)}$ (where $g_j$ are constants)  and  also suppose that 
 function $f: \mathbb{R} \to X$ is W-almost periodic and locally H\"{o}lder continuous with points of discontinuity
at $t = \bar\tau_j, j \in \mathbb{Z}$. 
If equation (\ref{lin1}) possesses an exponential dichotomy on $\R$, then the unique bounded solution $u_0:\R \to X$ is  W-almost periodic as map $u_0:\R \to X^\alpha.$
\end{thm}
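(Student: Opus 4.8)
The plan is to verify the definition of $W$-almost periodicity for $u_0:\R\to X^\alpha$ directly on the representation (\ref{inlin3}), the two essential tools being Lemma~\ref{wep}, to manufacture \emph{harmonized} almost periods, and Lemma~\ref{in2}, which says the Green function is quasi-invariant under translations. Since a finite sum of $W$-almost periodic functions sharing the same set $\{\bar\tau_j\}$ of eventual discontinuities is again $W$-almost periodic, I would split $u_0=u_0^{(1)}+u_0^{(2)}$ with $u_0^{(1)}(t)=\int_{\R}G(t,v)f(v)\,dv$ and $u_0^{(2)}(t)=\sum_jG(t,\bar\tau_j)g_j$. The Duhamel part $u_0^{(1)}$ is continuous and bounded into $X^\alpha$, with $|u_0^{(1)}(t)|_\alpha\le C_*\sup_v|f(v)|$ where $C_*:=\sup_t\int_{\R}M_1e^{-\beta|t-v|}\psi_\alpha(t-v)\,dv<\infty$ (finiteness uses $\alpha<1$), whereas $u_0^{(2)}$ really jumps by $g_j$ at $\bar\tau_j$, which is why one can only hope for $W$-almost periodicity (not Bohr almost periodicity) of $u_0$.

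Given $\epsilon>0$, I would apply Lemma~\ref{wep} to the sequence $\{c_j,g_j\}$, the strongly almost periodic set $\{\bar\tau_j\}$ and the $W$-almost periodic $f$, after enlarging the auxiliary vector-valued Bohr almost periodic function built in its proof by the extra component $A_1(\cdot)$ (legitimate by ${\bf (H2)}$): this furnishes, for each $\eta\in(0,\epsilon)$, a relatively dense set of reals $r$, each carrying an integer $q=q(r)$, such that $a_*(r)<\eta$, $|g_{k+q}-g_k|_\alpha<\eta$, $|(\bar\tau_{k+q}-\bar\tau_k)-r|<\eta$ for all $k$, and $|f(t+r)-f(t)|<\eta$ once $|t-\bar\tau_j|>\eta$ for all $j$. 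For $u_0^{(1)}$, after the substitution $v\mapsto v+r$,
\[
u_0^{(1)}(t+r)-u_0^{(1)}(t)=\int_{\R}\big(G(t+r,v+r)-G(t,v)\big)f(v+r)\,dv+\int_{\R}G(t,v)\big(f(v+r)-f(v)\big)\,dv;
\]
the first integral is $O(a_*(r))=O(\eta)$ by Lemma~\ref{in2} (with $\delta=0$), and in the second the set $\{v:|v-\bar\tau_j|>\eta\ \forall j\}$ contributes $\le\eta C_*$ while the union of $\eta$-neighbourhoods of the $\bar\tau_j$ contributes $O(\eta^{1-\alpha})$ uniformly in $t$ — the only subtlety being the integrable singularity of $v\mapsto|G(t,v)|$ at $v=t$, dominated by $\int_{|s|\le2\eta}(1+|s|^{-\alpha})\,ds$, the remaining neighbourhoods being uniformly summable by the $\theta$-separation of the $\bar\tau_j$. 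Hence $u_0^{(1)}$ is Bohr almost periodic into $X^\alpha$.

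For $u_0^{(2)}$ (absolutely convergent, since $\sum_je^{-\beta|t-\bar\tau_j|}$ is bounded uniformly in $t$) I would introduce $\Sigma_r(t):=\sum_jG(t+r,\bar\tau_j+r)g_j$; Lemma~\ref{in2} with $\delta=\alpha$ gives $|\Sigma_r(t)-u_0^{(2)}(t)|_\alpha\le 2M_2a_*(r)\sup_m|g_m|_\alpha\sum_je^{-\beta_1|t-\bar\tau_j|}=O(\eta)$. Reindexing, $u_0^{(2)}(t+r)=\sum_jG(t+r,\bar\tau_{j+q})g_{j+q}$, so $u_0^{(2)}(t+r)-\Sigma_r(t)$ breaks into $\sum_jG(t+r,\bar\tau_{j+q})(g_{j+q}-g_j)$, which is $O(\eta)$ by $|g_{j+q}-g_j|_\alpha<\eta$ and the uniform bound on $\sum_je^{-\beta|t+r-\bar\tau_{j+q}|}$, plus the residual term
\[
R(t):=\sum_j\big(G(t+r,\bar\tau_{j+q})-G(t+r,\bar\tau_j+r)\big)g_j,\qquad |\bar\tau_{j+q}-(\bar\tau_j+r)|<\eta.
\]
Estimating $R(t)$ is the crux: it needs a H\"older/Lipschitz bound for $s\mapsto G(\cdot,s)$ \emph{off} the diagonal, extracted from (\ref{evop2}), (\ref{inlin6}), (\ref{inlin7}) together with the extra regularity $g_j\in X^1$ (and a uniform control of $|g_j|_1$) provided by ${\bf (H4)}$; and it requires that each segment $[\bar\tau_j+r,\bar\tau_{j+q}]$ stay away from $t+r$, which is the case exactly when $|t-\bar\tau_n|\ge\epsilon$ for all $n$ and, say, $\epsilon>4\eta$ — indeed, using $|(\bar\tau_m-\bar\tau_{m-q})-r|<\eta$ one gets $|t+r-\bar\tau_{j+q}|\ge\epsilon-\eta$, so $G(t+r,\cdot)$ does not jump across any of these (short) segments. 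The outcome is $|R(t)|_\alpha\le C\sup_j|g_j|_1\,\eta\,\psi_\alpha(\eta)=O(\eta^{1-\alpha})$. Combining all contributions and choosing $\eta$ small, $|u_0(t+r)-u_0(t)|_\alpha<\epsilon$ whenever $|t-\bar\tau_n|\ge\epsilon$ for all $n$, and the admissible $r$ form a relatively dense set by Lemma~\ref{wep}; hence $u_0:\R\to X^\alpha$ is $W$-almost periodic with eventual discontinuities at $\bar\tau_j$.

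The hard part is precisely the residual term $R(t)$: one must quantify how the impulsive part of the solution reacts to displacing the impulse instants by the unavoidable $O(\eta)$ mismatch between $r$ and $\bar\tau_{j+q}-\bar\tau_j$, which entangles three issues — continuity of the Green function in its second argument in the $X^1\!\to\!X^\alpha$ scale, absorption of the near-diagonal factor $\psi_\alpha$ into the gain $\eta^{1-\alpha}$, and the diagonal jump of $G$, which is exactly what forces the restriction on $t$ inherent in $W$-almost periodicity.
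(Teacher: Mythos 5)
Your proposal is correct and follows essentially the same route as the paper: the same representation (\ref{inlin3}), the same harmonization of almost periods via Lemma~\ref{wep} (with $A_1$ adjoined), the same use of Lemma~\ref{in2} for the translated Green function, the same splitting of the Duhamel integral into near/far zones around the $\bar\tau_j$, and the same identification of the residual sum (your $R(t)$, the paper's $\frak{S}$) as the crux, handled identically via $|(U(\bar\tau_{j+q},\bar\tau_j+r)-I)g|_\alpha\le C\varepsilon^{1-\alpha}|g|_1$ from (\ref{evop2}), the dichotomy projections, and the $\theta$-separation. The only differences are presentational (the intermediate $\Sigma_r$ and the placement of the $(g_{j+q}-g_j)$ difference).
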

\begin{proof}  It suffices to show that the bounded  solution $u_0(t)$ given  in (\ref{inlin3})
is W-almost periodic as $X^\alpha$-valued function. Given positive $\varepsilon$, take $\varepsilon$-periods $r, q$ for $A_1: \R \to L(X^\alpha,X), \ f:\R\to X$ and $\{g_j\}$ as in  Lemma \ref{wep} and observe that  
$$
|u_0(t + r) - u_0(t)|_\alpha =  \left|\int_{-\infty}^\infty (G(t + r,s)- G(t,s))f(s)ds + \sum_{j \in \mathbb{Z}} (G(t + r, \bar\tau_{j})- G(t, \bar\tau_{j}))g_{j} \right|_\alpha. 
$$
Since $|f(t+r) - f(t)| < \varepsilon$ for $|t - \bar\tau_j| \ge \varepsilon$, in virtue of Lemma \ref{in2}, we find that, for $t\in \R$, 
\begin{eqnarray}
& & \left|\int_{-\infty}^\infty  (G(t + r,s)- G(t,s))f(s)ds\right|_\alpha \le  \int_{-\infty}^\infty |(G(t + r,s+r) - G(t,s))f(s+r)|_\alpha ds  \nonumber\\
& &  + \sum_{k \in \mathbb{Z}} \int_{\bar\tau_k + \varepsilon}^{\bar\tau_{k+1} - \varepsilon} |G(t,s)(f(s+r) - f(s))|_\alpha ds +
\sum_{k \in \mathbb{Z}} \int_{\bar\tau_k - \varepsilon}^{\bar\tau_{k} + \varepsilon} |G(t,s)(f(s+r) - f(s))|_\alpha  ds \nonumber\\
& &  \le \varepsilon M_2 \sup_{s \in \R}|f(s)| \int_{-\infty}^\infty e^{-\beta_1|t-s|}\psi_\alpha(t-s)ds +
\varepsilon M_1 \int_{-\infty}^\infty e^{-\beta|t-s|}\psi_\alpha(t-s)ds  \nonumber\\
& &  + 2 M_1 \sup_{s \in \R}|f(s)|\sum_{k \in \mathbb{Z}} \int_{\bar\tau_k - \varepsilon}^{\bar\tau_{k} + \varepsilon}
 e^{-\beta|t-s|}\psi_\alpha(t-s)ds:= \Gamma_1(\varepsilon), \quad \lim_{\varepsilon \to 0+}\Gamma_1(\varepsilon) = 0.  \label{in5}
\end{eqnarray}
Let $\bar\tau_{m-1} + \varepsilon \le t \le \bar\tau_{m} - \varepsilon.$ Then $\bar\tau_{m +q-1}  < t + r < \bar\tau_{m + q}$ and, using (\ref{evop2}), (\ref{inlin6}) and (\ref{inlin7}),  we obtain
\begin{eqnarray*}
& & \frak{S}:= \sum_{j \in \mathbb{Z}} | (G(t + r,\bar\tau_j + r) - G(t + r,\bar\tau_{j + q}))g_{j+q}|_\alpha  \nonumber\\ 
& &
\le \sum_{ j \ge m, \bar\tau_{j+q} \geq \bar\tau_j +r} |(U(t + r, \bar\tau_{j+q}) P(\bar\tau_{j+q})(I - U(\bar\tau_{j+q}, \bar\tau_j +r))g_{j+q}|_\alpha  \nonumber\\
& &
+ \sum_{ j \ge m, \bar\tau_{j+q} < \bar\tau_j +r} |(U(t + r, \bar\tau_{j}+r) P(\bar\tau_{j}+r)(I - U(\bar\tau_j +r, \bar\tau_{j+q}))g_{j+q}|_\alpha  \nonumber\\
& &   +\sum_{ j < m,  \bar\tau_{j+q} \geq \bar\tau_j +r} |(U(t + r, \bar\tau_{j+q})(I - P(\bar\tau_{j+q}))(I - U(\bar\tau_{j+q}, \bar\tau_j +r))g_{j+q}|_\alpha  \nonumber\\
& &  + \sum_{ j < m,  \bar\tau_{j+q} < \bar\tau_j +r} |(U(t + r, \bar\tau_{j}+r)(I - P(\bar\tau_{j}+r))(I - U(\bar\tau_j +r, \bar\tau_{j+q}))g_{j+q}|_\alpha   \nonumber\\
\end{eqnarray*}
\begin{eqnarray*}
& & \le \sum_{\bar\tau_{j+q} \geq \bar\tau_j +r} M e^{-\beta|\bar\tau_{j+q} - t - r|}|(I - U(\bar\tau_{j+q}, \bar\tau_j +r))g_{j+q}|_\alpha  +  \sum_{\bar\tau_{j+q} < \bar\tau_j +r} M e^{-\beta |t - \bar\tau_{j}|}|(I - U(\bar\tau_j +r, \bar\tau_{j+q}))g_{j+q}|_\alpha  \nonumber\\
& & \le \frac{2M C}{1 - e^{-\beta\theta}}\varepsilon^{1-\alpha}\sup_j|g_{j}|_1 +
\frac{2M C}{1 - e^{-\beta\theta}}\varepsilon^{1 - \alpha}\sup_j|g_{j}|_{1} =:
\Gamma_2(\varepsilon)\sup_j|g_{j}|_{1}, \nonumber
\end{eqnarray*}
where $\Gamma_2(\varepsilon) \to 0$ as $\varepsilon \to 0+.$ Consequently, 
 \begin{eqnarray}
& & \left|\sum_{j \in \mathbb{Z}} (G(t+r, \bar\tau_{j})- G(t, \bar\tau_{j}))g_{j} \right|_\alpha =  \left|\sum_{j \in \mathbb{Z}} G(t, \bar\tau_{j})g_{j} -
\sum_{j \in \mathbb{Z}} G(t + r, \bar\tau_{j+q})g_{j+q}\right|_\alpha  \nonumber\\
& & \le \sum_{j \in \mathbb{Z}} |G(t,\bar\tau_j)(g_{j+q} - g_j)|_\alpha +
\sum_{j \in \mathbb{Z}} | (G(t, \bar\tau_{j}) - G(t + r,\bar\tau_j + r))g_{j+q}|_\alpha + \frak{S}  \nonumber\\
& & \le \left( \frac{2M}{1 - e^{-\theta\beta}} + \frac{2M_2}{1 - e^{-\theta\beta_1}}\sup_j|g_j|_\alpha\right)\varepsilon +
\Gamma_2(\varepsilon)\sup_j|g_{j}|_{1}. \label{in7}
\end{eqnarray}
Finally, due to (\ref{in5}) and (\ref{in7}), $
|u_0(t + r) - u_0(t)|_\alpha \le\Gamma_3(\varepsilon),
$
for $|t - \bar\tau_j| \ge \varepsilon$, 
where $\Gamma_3(\varepsilon) \to 0$ as $\varepsilon \to 0+.$ This proves Wexler's almost periodicity of the function $u_0: \R \to X^\alpha.$ \hfill $\Box$
\end{proof}
Set 
$$
K=K(\alpha):= K_1(\alpha)+K_2(\alpha), \quad \mbox{where} \ K_1(\alpha): = \int_{-\infty}^\infty M_1 \psi_\alpha(s)e^{-\beta|s|} ds, \  \ K_2(\alpha): = \frac{2M_1}{1 - e^{-\beta\theta}}.
$$
Now we are in the position to prove the main result of this work: 
\begin{thm} \label{Thm7}For fixed $\alpha \geq 0,\  \rho >0$, assume  the hypotheses ${\bf (H1)}$ -- ${\bf (H5)}$. Suppose also that  

1) in the set $\R\times U_\rho^\alpha$, each trajectory of (\ref{ban1}), (\ref{ban2})
meets every surface $\Gamma_j$ no more than once;

2) $| f(t_1, u) - f(t_2, u)| \le H_1 |t_1 - t_2|,  \  \ H_1 > 0, u \in U_\rho^\alpha;$

3) $| f(t, u_1) - f(t, u_2)| + |g_j(u_1) - g_j(u_2)|_{\alpha}+
 |\tau_j(u_1) - \tau_j(u_2)| \le N_1|u_1 - u_2|_\alpha, $ uniformly in $t \in \mathbb{R}, j \in \mathbb{Z}, u \in U_\rho^\alpha$;

3) $|f(t,0)| \le M_0,\ t \in \R,  \ |g_j(0)|_{1} \le M_0, \ j \in \mathbb{Z}$. Moreover,  $|g_j(u)|_{1} \le g_*$ for all $|u|_\alpha \leq \rho, \ j \in \mathbb{Z}$;

4) equation (\ref{lin1}) is exponentially dichotomous  and $K(\alpha)M_0 < \rho$. 

If, in addition, $N_1$ is sufficiently small,  then system  (\ref{ban1}), (\ref{ban2})
has a unique W-almost periodic solution with values in  the domain  $U_\rho^\alpha$.
\end{thm}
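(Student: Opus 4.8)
The plan is to realise the desired regime as the unique fixed point of a Poincar\'e-type operator $S$ acting on the complete metric space $\frak{N}$ of almost periodic sequences $\{w_j\}_{j\in\Z}$ with $w_j\in X^1$, bounded by $\rho$ in the $X^\alpha$-norm and by a fixed constant (dictated by $g_*$ and the parabolic smoothing on intervals of length between $\theta$ and $\mathcal{Q}$) in the $X^1$-norm, and metrised by $\mathrm{dist}(w,\tilde w)=\sup_j|w_j-\tilde w_j|_\alpha$. Given $w\in\frak{N}$, I first read off the prospective impulse instants $\eta_j:=\tau_j(w_j)=aj+c_j(w_j)$; by ${\bf (H3)}$ the collection $\{\eta_j\}$ is strongly almost periodic with $\eta_{j+1}-\eta_j\ge\theta$. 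Since $N_1$ is small and $|G(t,s)x|_\alpha\le M_1 e^{-\beta|t-s|}\psi_\alpha(t-s)|x|$, the operator
$$
\mathcal{T}_w[\phi](t)=\int_{-\infty}^{\infty}G(t,v)f(v,\phi(v))\,dv+\sum_{j\in\Z}G(t,\eta_j)g_j(w_j)
$$
is a contraction on the set of W-almost periodic, uniformly locally H\"older functions $\phi:\R\to X^\alpha$ bounded by $\rho$ in $X^\alpha$ and discontinuous only at the $\eta_j$; invariance of this ball rests on $K(\alpha)M_0<\rho$ together with smallness of $N_1$, and the W-almost periodicity of the fixed point $u=u[w]$ follows from the argument of Theorem \ref{T7} applied to the strongly almost periodic set $\{\eta_j\}$ and to the almost periodic sequence $\{g_j(w_j)\}\subset X^1$. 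By construction $u[w]$ solves (\ref{ban1}) off the $\eta_j$ and has jumps $g_j(w_j)$ at the $\eta_j$; I then set $S[w]_j:=u[w](\eta_j-0)$.

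The first substantive point is that $S$ maps $\frak{N}$ into itself. Boundedness of $u[w](\eta_j-0)$ in $X^1$ follows from the smoothing estimate on the interval $(\eta_{j-1},\eta_j)$ — whose length lies between $\theta$ and $\mathcal{Q}$ by ${\bf (H3)}$ — since the jump opening this interval belongs to $X^1$ by ${\bf (H4)}$, while boundedness in $X^\alpha$ is inherited from $u[w]$. That $\{u[w](\eta_j-0)\}$ is again \emph{almost periodic} is where Lemma \ref{wep} enters: for each $\varepsilon$ it supplies common almost periods $q,r$ of the sequence $\{c_j(w_j),g_j(w_j)\}$, of the set $\{\eta_j\}$ and of the W-almost periodic function $u[w]$, so that $|u[w](\eta_{k+q}-0)-u[w](\eta_k-0)|_\alpha$ is controlled by first shifting $\eta_{k+q}$ to $\eta_k+r$ (an $O(\varepsilon)$-error, since $\eta_k+r$ stays away from the discontinuities of $u[w]$) and then using $|(\eta_{k+q}-\eta_k)-r|<\varepsilon$.

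The heart of the matter is the estimate $\mathrm{dist}(S[w],S[\tilde w])\le q_0\,\mathrm{dist}(w,\tilde w)$ with $q_0<1$, valid once $N_1$ is small. Writing
$$
u[w](\eta_j-0)-u[\tilde w](\tilde\eta_j-0)=\bigl(u[w]-u[\tilde w]\bigr)(\tilde\eta_j-0)+\bigl(u[w](\eta_j-0)-u[w](\tilde\eta_j-0)\bigr),
$$
the first summand is estimated through the $\mathcal{T}_w$–$\mathcal{T}_{\tilde w}$ comparison, every term of which carries a factor $N_1$ from the Lipschitz bounds on $f,g_j,\tau_j$ (and from the contraction already used to construct $u[\cdot]$), while the second is estimated from the continuity of $u[w]$ near the right endpoint $\eta_j^-$ of $(\eta_{j-1},\eta_j)$, where $u[w]$ lies in a fixed ball of $X^1$, using that $|\eta_j-\tilde\eta_j|\le N_1|w_j-\tilde w_j|_\alpha$ is again small. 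The delicate terms are those recording how the Green-function series $\sum_k G(t,\eta_k)g_k(w_k)$ responds to the simultaneous perturbation of \emph{all} base points $\eta_k\mapsto\tilde\eta_k$; each summand must be split into a common-shift part — handled by (the proof of) Lemma \ref{in2} — and a residual handled by (\ref{evop2}), (\ref{inlin6}), (\ref{inlin7}) together with the separation $\eta_{k+1}-\eta_k\ge\theta$, which both makes $\sum_k e^{-\beta|t-\eta_k|}$ summable and keeps every base point at distance $\ge\theta$ from the diagonal $s=t$ except for a single index, whose contribution combines with the $-P(\eta_j)g_j(w_j)$ term already present in $u[w](\eta_j-0)$. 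I expect this book-keeping — in particular reconciling the H\"older character of the parabolic estimates with a Lipschitz contraction, which is exactly where ${\bf (H4)}$ is used to regain the missing regularity — to be the principal obstacle and the technical core of the theorem.

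Finally, let $w^*\in\frak{N}$ be the fixed point produced by the contraction principle and put $u^*:=u[w^*]$, $\eta_j^*:=\tau_j(w_j^*)$. Then $u^*$ is W-almost periodic into $X^\alpha$, solves (\ref{ban1}) between the $\eta_j^*$ and satisfies $u^*(\eta_j^*+0)-u^*(\eta_j^*)=g_j(w_j^*)$; since $w_j^*=u^*(\eta_j^*-0)=u^*(\eta_j^*)$ by the fixed-point identity and left-continuity, we get $\eta_j^*=\tau_j(u^*(\eta_j^*))$ and $u^*(\eta_j^*+0)-u^*(\eta_j^*)=g_j(u^*(\eta_j^*))$, i.e.\ the trajectory of $u^*$ meets $\Gamma_j$ at $\eta_j^*$ with the prescribed jump. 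Hypothesis 1) then guarantees it meets $\Gamma_j$ \emph{only} at $\eta_j^*$, and ${\bf (H3)}$ guarantees that no surface $\Gamma_k$, $k\ne j$, is met in $(\eta_{j-1}^*,\eta_{j+1}^*)$, so $u^*$ is a genuine W-almost periodic solution of (\ref{ban1}), (\ref{ban2}) with values in $U^\alpha_\rho$. Conversely, any W-almost periodic solution $u$ with values in $U^\alpha_\rho$ has, by 1), exactly one hitting instant $T_j$ per surface, and $w_j:=u(T_j)$ defines (again via Lemma \ref{wep}) an element of $\frak{N}$ which, because $u$ is the unique bounded solution of the corresponding frozen problem $\mathcal{T}_w[u]=u$, is a fixed point of $S$; uniqueness of the fixed point yields uniqueness of $u$.
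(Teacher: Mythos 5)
Your architecture coincides with the paper's: freeze the impulse instants at $\tau_j(w_j)$, solve the resulting fixed-moment problem by Picard iteration on the Green-function integral equation (well-posedness and W-almost periodicity coming from Theorem \ref{T7}), evaluate at the impulse instants to obtain a Poincar\'e map $S$ on a space of almost periodic sequences, and contract; the final identification of the fixed point with a genuine solution, and the uniqueness argument, are also as in the paper. The gap is that the crux of the theorem --- the proof that $S$ is a contraction --- is not carried out: you describe the difficulty (``reconciling the H\"older character of the parabolic estimates with a Lipschitz contraction'') and declare it the ``principal obstacle and the technical core'', but you do not resolve it. The paper resolves it by propagating through the Picard iterates bounds of the form $|u_n(t,y)-u_n(t,z)|_\alpha\le\bigl(L'_n+L''_n(t-\tau''_m)^{-\alpha}\bigr)N_1\|y-z\|$ on each interval $(\tau''_m,\tau'_{m+1}]$, where $\tau''_m=\max\{\tau_m(y_m),\tau_m(z_m)\}$; the recursion for $(L'_n,L''_n)$ stays bounded for small $N_1$, and the singular weight is harmless because $S$ is evaluated at $t=\tau'_{m+1}$, at distance at least $\theta$ from $\tau''_m$. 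Two ingredients of that computation are invisible in your sketch: (i) on the short intervals $(\tau'_k,\tau''_k)$ the two frozen solutions are not close at all (one has already jumped, the other has not), and smallness of their contribution comes solely from the interval length $|\tau''_k-\tau'_k|\le N_1\|y-z\|$; (ii) the perturbation of the base points is not a common shift, so Lemma \ref{in2} does not apply to it --- the paper instead writes $G(t,\tilde\tau^1_k)-G(t,\tilde\tau^2_k)$ as $G(t,\tau''_k)(U(\tau''_k,\tau'_k)-I)$ and uses (\ref{evop2}) together with the $X^1$-bound $g_*$ from ${\bf (H4)}$, which is precisely where the linear (not merely H\"older) dependence on $|\tau''_k-\tau'_k|$ is gained.

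Two smaller points. For the term $u[w](\eta_j-0)-u[w](\tilde\eta_j-0)$, plain continuity is insufficient: one needs a Lipschitz bound in $t$ with constant uniform in $j$ and $w$, which the paper extracts from the derivative estimate of \cite[Theorem 3.5.2]{H} combined with the a priori $X^\gamma$-bounds (\ref{vot}) valid away from the impulse instants. And your insistence that the elements of $\frak{N}$ carry a uniform $X^1$-bound is a self-inflicted complication: the parabolic estimates directly deliver uniform $X^\gamma$-bounds only for $\gamma<1$, whereas ${\bf (H4)}$ already places $g_j(y_j)$ in $X^1$ with $|g_j(y_j)|_1\le g_*$ for every $y_j\in U^\alpha_\rho$, so the sequence space need only be the $X^\alpha$-ball, as in the paper.
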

\begin{proof} Let $\mathfrak{N}$ denote the complete metric space of all almost periodic
sequences $y = \{y_j\},$ $y_j \in X^\alpha,$ $|y_j|_\alpha \leq \rho,$ provided with the distance
$\| y-z\| = \sup_{j}|y_j-z_j|_\alpha.$ For 
$y = \{y_j\} \in \mathfrak{N}$, 
consider the following equation with the fixed moments $\tilde\tau_i = \tau_i(y_i)$ of  impulse action:
\begin{eqnarray} \label{ban1y}
& & \frac{dx}{dt} + (A + A_1(t)) x = f(t, x), \quad t \not = \tilde\tau_j, \\[2mm]
 %\Delta u|_{t= \tilde\tau_j(y)} =
 & & x(\tilde\tau_j + 0) - x(\tilde\tau_j) = g_j(y_j), \quad j \in \mathbb Z. \label{ban2y}
\end{eqnarray}
Take  $u_0(t,y) \equiv 0$ and consider the sequence 
\begin{equation}\label{ipr} 
u_{n+1}(t,y) = \int_{-\infty}^\infty G(t,s) f(s, u_n(s,y))ds + \sum_{j \in \mathbb{Z}}
G(t,\tilde \tau_j)g_j(y_j), \ n = 0,1,\dots
\end{equation}
This sequence is well defined since, for sufficiently small $N_1$ and all $n\geq 0$,  $$|u_{n+1}(t,y)|_\alpha \le
 \int_{-\infty}^\infty|G(t,s)(f(s, 0) + f(s, u_n(s,y)) - f(s, 0))|_\alpha ds $$
 $$+\sum_{j \in \mathbb{Z}} | G(t, \tilde\tau_j)(g_j(0) + g_j(y_j) - g_j(0))|_\alpha \le  
K_1 M_0 + K_1 N_1\sup_t|u_n(t,y)|_\alpha + K_2 M_0 + K_2 N_1\rho <\rho.$$ 
Now, Theorem \ref{T7} assures that $u_1:\R\to X^\alpha$ is an W-almost periodic function. Thus 
$f(t,u_1(t))$ is also $X$-valued W-almost periodic function. In consequence, Theorem \ref{T7}  yields 
that function $u_2:\R\to X^\alpha$ is  W-almost periodic. Repeating this procedure, we conclude that 
each function $u_{n+1}(\cdot,y): \R \to X^\alpha$ is W-almost periodic. In addition, the sequence $\{u_{n}(t,y)\}$ is 
converging, uniformly on $\R$,  to the limit function $u^*(t,y)$ since 
$$\sup_{t\in \R}|u_{n+1}(t,y)- u_{n}(t,y)|_\alpha\leq KN_1\sup_{t\in \R}|u_{n}(t,y)-u_{n-1}(t,y)|_\alpha.$$
Clearly, $u^*:\R \to X^\alpha$ is 
W-almost periodic function and it satisfies the  integral equation
\begin{equation}\label{nne}
u(t,y) = \int_{-\infty}^\infty G(t,s) f(s, u(s,y))ds + \sum_{j \in \mathbb{Z}}
G(t,\tau_j(y_j))g_j(y_j).
\end{equation}
As we have already shown, this means that $u^*(t,y)$ is the unique bounded solution of the system 
(\ref{ban1y}), (\ref{ban2y}) taking values in the domain  $U_\rho^\alpha\cap X^1$.  Note that 
$$
\sup_{y  \in \mathfrak{N}, s \in \R} |f(s, u^*(s,y))| \leq M_0+N_1\rho:=M_*, \quad \sup_{\{y_j\}  \in \mathfrak{N}} \|g_j(y_j)\| \leq M_0+N_1\rho. 
$$
Therefore, for a fixed $\gamma \in [\alpha,1)$, uniformly on $y  \in \mathfrak{N}$ and  $t \in \R\setminus\{\tilde\tau_j\}$, 
we have that 
\begin{eqnarray*}
 & & |u^*(t,y)|_\gamma \le \int_{-\infty}^\infty|G(t,s)f(s, u^*(s,y))|_\gamma ds \\
 & & + \sum_{j \in \mathbb{Z}} | G(t, \tilde\tau_j)g_j(y_j)|_\gamma \leq M_*(K_1(\gamma) 
+\sum_{j \in \mathbb{Z}} M_1e^{-\beta|t- \tilde\tau_j|}\psi_{\gamma-\alpha}(t- \tilde\tau_j)).
\end{eqnarray*}
In particular,  
\begin{equation}\label{vot}
|u^*(t,y)|_\gamma \le 
M_*(K_1(\gamma)   +K_2(1+(\theta/4)^{\alpha-\gamma})), \ t \in \R\setminus\ \cup_j[\tilde\tau_j, \tilde\tau_j+\theta/4), \ y  \in \mathfrak{N}.
\end{equation}
Since $u^*(t,y)$ is W-almost periodic and 
$ \tilde\tau_{j+1} -  \tilde\tau_j \geq \theta, \ j \in \Z$, it is easy to find (e.g., see \cite[p. 214]{SP}) that 
$X^\alpha$-valued sequence $S(y) := \{u^*(\tau_j( y_j), y), \ j \in \Z\}$ is also almost periodic, 
$S(y) \in \frak{N}$. 
Evidently,   $u^*(\cdot, y^*):\R \to U_\rho^\alpha$ will be required 
W-almost period solution of system (\ref{ban1}), (\ref{ban2}) 
if, and only if, 
$y^*\in \frak{N}$ is such that $S(y^*)= y^*$. 

We will find sufficient conditions for the  map $S: \frak{N} \to \frak{N}$ to be a contraction.  Take 
$z,y \in \frak{N}$ and set  $\tilde\tau_j^1 = \tau_j(y_j), \ \tilde\tau_j^2 = \tau_j(z_j),$ 
$\tau''_{j}= \max\{\tilde\tau_{j}^1,\tilde\tau_{j}^2\},\ \tau'_j= \min\{\tilde\tau_j^1,\tilde\tau_j^2\}. $
We have 
$\|S(y) - S(z)\| =\sup_j
| u^*(\tilde\tau_j^1, y) -
u^*(\tilde\tau_j^2, z)|_\alpha$ and we will estimate the difference $| u^*(\tilde\tau_j^1, y) -
u^*(\tilde\tau_j^2, z)|_\alpha$ for an arbitrary fixed $j$.  Without loss of generality, we can 
assume  that  $\tilde\tau_j^1 \leq  \tilde\tau_j^2$, for this particular $j$.  Then it is convenient to use the 
triangle inequality by adding and subtracting the same term $u^*(\tilde\tau_j^1, z)= u^*( \tau_j(y_j), z)$:
\begin{equation}\label{pe}
| u^*(\tilde\tau_j^1, y) -
u^*(\tilde\tau_j^2, z)|_\alpha \le | u^*(\tilde\tau_j^1, y) - u^*(\tilde\tau_j^1, z)|_\alpha +
|u^*(\tilde\tau_j^1, z) - u^*(\tilde\tau_j^2, z)|_\alpha. 
\end{equation}
At first, we will estimate the difference $|u^*(\tilde\tau_j^1, y) - u^*(\tilde\tau_j^1, z)|_\alpha$. At this step, the iterative 
process (\ref{ipr}) will be again invoked. In particular, for $n=0$ and $t \in (\tau''_m, \tau'_{m+1}]$, we get
\begin{eqnarray}
& & \hspace{-7mm} i_1:=|u_{1}(t,y) - u_{1}(t,z)|_\alpha \leq  \sum_k |G(t,\tilde\tau_k^1)\left(g_k(y_k) - g_k(z_k)\right)|_\alpha + \sum_k | \left(  G(t,\tilde\tau_k^1) -  G(t,\tilde\tau_k^2)\right) g_k(z_k)|_\alpha  \nonumber\\
& & \le \sum_{k} M e^{-\beta|t - \tilde\tau_k^1|}N_1|y_k-z_k|_\alpha +
\sum_{k} |G(t,\tau''_k)(U(\tau''_k, \tau'_k) - I)g_k(z_k)|_\alpha  \nonumber\\
& & 
\leq  \frac{2M N_1}{1 - e^{-\beta \theta}}\|y-z\| + \sum_{k} M_1 C e^{-\beta|t - \tau''_k|}\psi_\alpha(t-\tau''_k)|\tau''_k - \tau'_k|\,|g_k(z_k)|_1  \nonumber\\
 & & \le N_1 \|y-z\|\left(\frac{2}{1 - e^{-\beta \theta}} \left(M + M_1 C(1+\theta^{-\alpha})g_*\right) + (t-\tau''_m)^{-\alpha}g_*M_1 C\right).
 \nonumber
\end{eqnarray}
Similarly, 
\begin{eqnarray}
& & |u_{n+1}(t,y) - u_{n+1}(t,z)|_\alpha \nonumber\\
& &   =\left| \int_{\R} G(t,s) (f(s, u_n(s,y))-f(s, u_n(s,z)))ds +\sum_{k \in \mathbb{Z}}  (G(t,\tilde\tau_k^1)g_k(y_k) -
G(t,\tilde\tau_k^2)g_k(z_k))\right|_\alpha  \nonumber\\
& & \le \sum_k \int_{\tau''_k}^{\tau'_{k+1}} |G(t,s)\left( f(s, u_n(s,y)) -
f(s, u_n(s,z))\right)|_\alpha ds \nonumber\\
& & +\sum_k \int_{\tau'_k}^{\tau''_k} |G(t,s)\left( f(s, u_n(s,y)) -
f(s, u_n(s,z))\right)|_\alpha ds +  i_1 =: i_3+i_2+i_1.
 \nonumber
\end{eqnarray}
For $t \in (\tau''_m, \tau'_{m+1}]$, we obtain
\begin{eqnarray}
& & i_2 \leq  \sum_k \int_{\tau'_k}^{\tau''_{k}} |G(t,s)f(s, u_n(s,y))|_\alpha ds +
\sum_k \int_{\tau'_k}^{\tau''_{k}} |G(t,s)f(s, u_n(s,z))|_\alpha ds  \nonumber\\
& &  \le 2\sum_{k \not= m} M_1 e^{-\beta\min\{|t - \tau''_{k}|, |t -  \tau'_{k}|\}}(1+\theta^{-\alpha})M_* N_1 \|y-z\| +
\int_{\tau'_m}^{\tau''_{m}} |G(t,s)f(s, u_n(s,y))|_\alpha d s   \nonumber\\
& & +\int_{\tau'_m}^{\tau''_{m}} |G(t,s)f(s, u_n(s,z))|_\alpha ds \le \left(\frac{2 M_1(1+\theta^{-\alpha})}{1 - e^{-\beta \theta}} + 2 M_1(t - \tau''_m)^{-\alpha}\right)M_*N_1 \|y-z\|.
\nonumber 
\end{eqnarray}
Therefore, if $t \in (\tau''_m, \tau'_{m+1}]$, then 
$$
|u_{n+1}(t,y) - u_{n+1}(t,z)|_\alpha \le i_3+ \left(K_3 + \frac{K_4}{(t -  \tau''_m)^\alpha}\right)N_1 \|y-z\|,
$$
where
\begin{eqnarray*}
K_3 = \frac{M_1(1+\theta^{-\alpha})(1 +Cg_* + 2M_*)}{1 - e^{-\beta \theta}}, \quad K_4 = M_1\left(g_*C +
2M_*\right).
\end{eqnarray*}
Fix some $n\geq 1$ now and suppose that there are positive constants $L'_n$ and $L''_n$ such that 
$$|u_{n}(t,y) - u_{n}(t,z)|_\alpha \le \left(L'_n + \frac{L''_n}{(t -  \tau''_k)^\alpha}\right)N_1 \|y-z\|, 
\quad t \in (\tau''_k, \tau'_{k+1}], \ k \in \Z.$$
Then, for $t \in (\tau''_m, \tau'_{m+1}]$, we have that $ |u_{n+1}(t,y) - u_{n+1}(t,z)|_\alpha \le $
\begin{eqnarray}
& & \hspace{-3mm}N_1 \|y-z\| \left(K_3 + \frac{K_4}{(t -  \tau''_m)^\alpha}\right) + N_1^2\|y-z\| \sum_{k } \int_{\tau''_k}^{\tau'_{k+1}}M_1 e^{-\beta|t-s|} \psi_{\alpha}(t-s)\left( L'_n + \frac{L''_n}{(s - \tau''_k)^\alpha}\right)ds  \nonumber\\[2mm]
& & \leq  N_1 \|y-z\|\left(K_3 + \frac{K_4}{(t -  \tau''_m)^\alpha}\right)+N_1^2\|y-z\|\, \Biggl[\sum_{k} \int_{\tau''_k}^{\tau'_{k+1}}M_1 e^{-\beta|t-s|} (1+\theta^{-\alpha}) \left( L'_n + \frac{L''_n}{(s - \tau''_k)^\alpha}\right)ds \nonumber\\[2mm]
& &
+\int_{\tau''_{m}}^t M_1(t-s)^{-\alpha} \left( L'_n + \frac{L''_n}{(s - \tau''_m)^\alpha}\right)ds \Biggl] \  \nonumber\\[2mm]
& & \hspace{-5mm}\le \Biggl( 2\frac{(1+\theta^{-\alpha})}{1 - e^{-\beta\theta}}\left( L'_n\mathcal{Q} + \frac{L''_n \mathcal{Q}^{1-\alpha}}{1-\alpha}\right) +
{L''_n B(1-\alpha,1-\alpha)}(t -  \tau''_m)^{1-2\alpha} +\frac{L'_n}{1-\alpha}(t -  \tau''_m)^{1-\alpha}\Biggl)M_1N_1^2\|y-z\|   \nonumber\\[2mm]
& &   \hspace{-5mm} +
 \left(K_3 + \frac{K_4}{(t -  \tau''_m)^\alpha}\right)N_1 \|y-z\| \le  \left( L'_n N_1 \Psi_1 + L''_n N_1 \Psi_2 + L''_n \frac{N_1 \Psi_3}{(t-\tau''_m)^\alpha} +
 K_3 + \frac{K_4}{(t-\tau''_m)^\alpha}\right)N_1\|y-z\|  \nonumber\\[2mm]
 & & = \left(L'_{n+1} + \frac{L''_{n+1}}{(t - \tau''_m)^\alpha}\right)N_1 \|y-z\|, \nonumber
\end{eqnarray}
where $L'_{n+1}:= N_1\Psi_1L_n' + N_1\Psi_2L_n''+K_3,\ L''_{n+1}:= N_1\Psi_3L_n''+K_4, \ L'_0=L_0''=0,$
$$\Psi_1 = \frac{2M_1 (1+\theta^{-\alpha}) \mathcal{Q}}{1- e^{-\beta\theta}} + \frac{M_1 \mathcal{Q}^{1-\alpha}}{1- \alpha}, \
\Psi_2 = \frac{2M_1 (1+\theta^{-\alpha})\mathcal{Q}^{1-\alpha}}{(1- e^{-\beta\theta})(1-\alpha)}, \ \Psi_3 = M_1B(1-\alpha,1-\alpha) \mathcal{Q}^{1-\alpha}.$$
It is easy to see that if $N_1 < \min\{\Psi_1^{-1}, \Psi_3^{-1}\}$ then sequences $L'_{n}$ and $L''_{n}$ are uniformly bounded by some constants
$L'_{*}$ and $L''_{*}$ (for example, it is possible to take $L''_{*}= K_4/(1- N_1\Psi_3)$). Therefore, taking the limit in the latter chain of inequalities, we obtain that
\begin{eqnarray*}
 |u^{*}(t,y) - u^{*}(t,z)|_\alpha \le \left(L'_{*} + \frac{L''_{*}}{(t -  \tau''_m)^\alpha}\right)N_1 \|y-z\|, \quad  t \in (\tau''_m, \tau'_{m+1}], \ m \in \Z,
\end{eqnarray*}
so that 
\begin{eqnarray}
|u^*(\tilde\tau_j^1, y) - u^*(\tilde\tau_j^1, z)|_\alpha \le \left( L'_{*} + \frac{L''_{*}}{\theta^\alpha}\right)N_1\|y-z\| \quad \mbox{for} \ j=m+1 \quad \mbox{(i.e.} \ \tilde \tau_j^1 =  \tau'_{m+1}). 
\label{rizn9}
\end{eqnarray}
To complete the proof, still we need to estimate the norm $|u^*(\tilde\tau_j^1, z) - u^*(\tilde\tau_j^2, z)|_\alpha$. 
Recall that we are assuming that $\tilde \tau_j^1 \leq \tilde \tau_j^2$.  By invoking estimates (\ref{vot}) and applying  \cite[Theorem 3.5.2]{H}, we conclude that for each $\mu \in [0,1)$ there exists a positive 
$\tilde C$, which does not depend on $y \in \frak{N}$, $j \in \Z$, and such that
$$\left|\frac{d}{ds}u^*(s, z)\right|_\mu \le \tilde C (s - \tilde\tau_{j-1}^2-\theta/4)^{\alpha-\mu-1}, s \in (\tilde\tau_{j-1}^2+\theta/4,\tilde\tau_j^2).$$
Consequently, for $s \in [\tilde\tau_{j-1}^2+\theta/2,\tilde\tau_j^2),$ we have $|du^*(s, z)/ds|_\alpha \le   {4\tilde C}/{\theta}$ so that 
$$ |u^*(\tilde\tau_j^1, z) - u^*(\tilde\tau_j^2, z)|_\alpha= \left|\int_{\tilde\tau_j^1}^{\tilde\tau_j^2} \frac{d}{ds}u^*(s, z) ds \right|_\alpha
 \le  \frac{4\tilde C}{\theta} |\tilde\tau_{j}^1 - \tilde\tau_j^2| \le
 \frac{4\tilde C N_1}{\theta} \|y-z\|.
$$
The latter estimate and (\ref{pe}), (\ref{rizn9}) imply that $\|S(y) - S(z)\|\leq O(N_1)\|y - z\|$ so that 
 $S: \frak{N} \to \frak{N}$ is a contraction if $N_1$ is sufficiently small. This completes 
the proof of Theorem \ref{Thm7}.
\hfill $\Box$
\end{proof}
\begin{example}
Consider the system (\ref{ex1}), (\ref{ex2}), together with the Dirichlet boundary conditions, and assume all the conditions on coefficients of this system  imposed in Example \ref{EXA4}.  Suppose also that the subset $\{t_j\}\subset \R$ is strongly almost periodic, $\{b_j\}, \{I_j(x)\}, \{d_j(x)\}, \{\partial^k K_j(\xi,\zeta)/\partial \xi^k\},$ $ k=0,1,2,$ are scalar almost periodic (uniformly in $x$ from bounded subsets of $\R$ and $(\xi,\zeta)\in [0,l]^2$) sequences,  $a(t), b(t)$ are Bohr's almost periodic functions, 
and the mean value $\bar a$ of $a(t)$ is less than the first eigenvalue of the  operator $A$: 
$
\bar a  < {\pi^2}/{l^2}.
$ 
Then, by Theorem  \ref{Thm7}, for each given quintuple $\frak{Q} =\left\{l, a(t), \{t_j\},  \{d_j\}, \{K_j\}\right\}$ and 
each $\rho >  K(0.5)\sup_{j \in \Z}|d_j|_{H^2},$
there is a positive number $\delta$ depending only on  $\frak{Q}$ and $\rho$ such that  if $b(t) \leq \delta,\, t \in \R,$ and $|b_j|, \Lambda_j \leq \delta, \ j \in \Z$, then system (\ref{ex1}), (\ref{ex2}) has a unique positive $H^1_0(0,l)$-valued  Wexler's almost periodic solution $u(t,\cdot), \ t \in \R,$ satisfying the inequality $|u(t,\cdot)|_{H^1_0(0,l)} \leq \rho, \ t \in \R$.  Note that  the graph of $u(t,\cdot)$ intersects each surface $\Gamma_j$ of impulsive action exactly one time due to the result of Example \ref{EXA4} and the non-negativity  of the solution  $u(t,\cdot)$.  Now, $u(t,x)\geq 0$ for all $t\in \R, \ x \in (0,l),$  due to the following two facts: 1) if $
\bar a  < {\pi^2}/{l^2}$  then the Green function $G(t,\tau)$ of the equation $u_t=u_{xx}+a(t)(1-\rho b(t))u$  has the form 
\begin{eqnarray*}
G(t,s) =
 \left\{\begin{array}{l}
         e^{-A(t-s)}e^{\int_s^t(a(s)(1-\rho b(s))ds}, \ t > s, \quad A = - \frac{\partial^2}{\partial \xi^2},\\
            0, \ t \leq s, 
          \end{array} \right.
\end{eqnarray*}
and it is non-negative since  clearly $e^{-At}\geq 0$; 2) the solution $u(t,y)$ of the integral equation (\ref{nne}) is non-negative because the sequence $u_n(t,y)$ generated by (\ref{ipr})  is non-negative for all $n\in \N$ (observe that $g_j(u)(\xi)= \int_0^lK_j(\xi,\zeta)I_j(u(t,\zeta))d\zeta+ d_j(\xi)\geq 0$ and $f(t,u)= a(t)b(t)(\rho-u)u \geq 0$ for $u\in [0,\rho]$). 
It is easy to see that  actually $u(t,x) \equiv 0$ if all $d_j(\xi)\equiv 0$ and that 
$u(t,x) > 0$ for all $t\in \R, \ x \in (0,l),$ if $d_j(\xi_0) >0$ at least for one $j$ and some $\xi_0 \in (0,l) $.

\end{example}

\section*{Acknowledgments}

\vspace{-1.5mm}

\noindent  
The authors express their gratitude to
the anonymous referee  for careful reading of the manuscript and valuable comments.  
This research was supported by the CONICYT grant 80130046 (S. T. and R. H.), the Academy of Sciences of the Czech Republic grant RVO 67985840 (R. H.),  and the FONDECYT  projects 1080034, 1120709 (M. P. and V. T.). 
\vspace{3.5mm}

\end{document}